\newtheorem{Theorem}{Theorem}
\newtheorem{Cor}{Corollary}
\newtheorem{Prop}{Proposition}
\newtheorem{Definition}{Definition}
\newtheorem{num_example}{Example}
\newcommand{\MATLAB}{\textsc{Matlab}~}
\newcommand{\LL}{\mathbb{L}}
\newcommand{\sL}{\mathbb{S}}
\newcommand{\CC}{\mathbb{C}}
\newcommand{\by}{{\bf y}}
\newcommand{\bfz}{{\bf 0}}
\newcommand{\bX}{{\bf X}}
\newcommand{\bw}{{\bf w}}
\newcommand{\bv}{{\bf v}}
\newcommand{\bA}{{\bf A}}
\newcommand{\bB}{{\bf B}}
\newcommand{\bC}{{\bf C}}
\newcommand{\bD}{{\bf D}}
\newcommand{\bH}{{\bf H}}
\newcommand{\bE}{{\bf E}}
\newcommand{\bx}{{\bf x}}
\newcommand{\bM}{{\bf M}}
\newcommand{\bI}{{\bf I}}
\newcommand{\bP}{{\bf P}}
\newcommand{\bR}{{\bf R}}
\newcommand{\bL}{{\bf L}}
\newcommand{\bV}{{\bf V}}
\newcommand{\bW}{{\bf W}}
\newcommand{\bY}{{\bf Y}}
\newcommand{\br}{{\bf r}}
\newcommand{\ls}{\mathrm{s}}
\newcommand{\ii}{\mathrm{i}}
\newcommand{\C}{\mathcal{C}}
\begin{document}

\title{{An efficient, memory-saving approach for the Loewner framework}}
\author{Davide Palitta        \and
       Sanda Lefteriu
}
\institute{D. Palitta \at
              Max Planck Institute for Dynamics of Complex Technical Systems\\
Sandtorstraße 1, 39106 Magdeburg, Germany\\
              \email{palitta@mpi-magdeburg.mpg.de}           
           \and
           S. Lefteriu \at
            IMT Lille Douai, Institut Mines-Télécom, Univ. Lille, Centre for Education, Research and Innovation (CERI) Digital
Systems,\\
F-59000 Lille, France\\
\email{sanda.lefteriu@imt-lille-douai.fr} 
}
\date{Received: date / Accepted: date}
\maketitle

\begin{abstract}
The Loewner framework is one of the most successful data-driven model order reduction techniques.
If $N$ is the cardinality of a given data set, the so-called Loewner and shifted Loewner matrices $\LL\in\mathbb{C}^{N\times N}$ and $\sL\in\mathbb{C}^{N\times N}$ can be defined by solely relying on information encoded in the considered data set and they play a crucial role in the computation of the sought rational model approximation.
 In particular, the singular value decomposition of a linear combination of $\sL$ and $\LL$ provides the tools needed to construct accurate models which fulfill important approximation properties with respect to the original data set. 
 However, for highly-sampled data sets, the dense nature of $\LL$ and $\sL$ leads to numerical difficulties, namely the failure to allocate these 
matrices in certain memory-limited environments or excessive computational costs. Even though they do not possess any sparsity pattern, the 
Loewner and shifted Loewner matrices are extremely structured and,
in this paper, we show how to fully exploit their Cauchy-like structure  
to reduce the cost of computing accurate rational models while avoiding the explicit allocation of $\LL$ and $\sL$. In particular, the use of the \emph{hierarchically semiseparable} format allows us to remarkably lower both the computational cost and the memory requirements of the Loewner framework obtaining a novel scheme whose costs scale with $N \log N$. 

\keywords{Loewner framework \and data-driven model order reduction \and Cauchy-like matrices \and HSS matrices}
\end{abstract}

\section{Introduction}
The Loewner framework, originally proposed in \cite{artAJMACA} for solving the generalized realization problem coupled with tangential interpolation, was successfully employed for data-driven model order reduction from frequency domain data \cite{SLACATCAD09}. Measurements of the frequency response are available in several communities: electrical engineering (impedance, admittance or scattering parameters \cite{SLACATCAD09}), mechanical and civil engineering (structural and vibro-acoustic frequency response functions \cite{PolyMAX} or frequency response measurements of thermal systems \cite{Thermalsys}), to name a few. The first step in the Loewner framework consists in setting up the data matrices and building the Loewner and shifted Loewner matrices entry-wise based on the chosen partition into right and left data, followed by computing the singular value decomposition (SVD) of a linear combination of these matrices and forming the model by projection, using the dominant singular triplets. The main advantages of the Loewner framework over existing approaches are, on the one hand, its system identification capabilities, in the sense that the order of the system can be deduced from the singular value drop, and, on the other hand, its potential in dealing with systems with a large number of inputs and outputs efficiently, thanks to incorporating the concept of tangential interpolation. The main drawbacks, however, are the large storage requirements paired with the significant CPU cost inherent to the full SVD computation for data sets with a large number of measurements (values in the range $10^5$ are common in industrial applications). To bypass these inconveniences, greedy-type approaches were proposed in \cite{SLACATCAD09}, thus reducing memory requirements, from $\mathcal{O}(N^2)$ for storing the dense Loewner and shifted Loewner matrices to $\mathcal{O}(N+n^2)$, and the computational cost, from $\mathcal{O}(N^3)$ for computing the SVD to  $\mathcal{O}(Nn^3)$ and $\mathcal{O}(Nn^4)$, where $N$ is the size of the data set and $n$ is the order of the model. 

Taking advantage of numerical linear algebra tools to reduce storage and computational requirements for the Loewner framework is another avenue worth exploring due to the inherent structure embedded in the albeit dense Loewner and shifted Loewner matrices. The factored ADI-Galerkin method for computing these matrices as solutions to certain Sylvester equations with a factored right-hand side was investigated in~\cite{FSVDL}. Such a scheme computes low-rank approximations to the dense Loewner matrix to speed-up the SVD computation. However, in~\cite{FSVDL} no results about the accuracy of the computed reduced models are reported. Moreover, the memory constraints coming from the allocation of $\LL$ and $\sL$ are still present. Alternatively, one can focus on accelerating solely the step of the SVD calculation by employing Krylov methods (see, e.g., \cite{Sto12,Hoc01,BagRei05,Lar98} to name a few), by using the randomized SVD \cite{nakatsukasa2020fast} to compute the dominant singular triplets instead of the full SVD or other types of inexact SVD-type decompositions (adaptive cross approximation \cite{ACA}, particularly suited for hierarchical matrices, or a CUR decomposition \cite{CUR} as in \cite{LoewCUR, 9073015}). 

The novel approach proposed in this paper tackles the issue of the memory requirements, at the same time as reducing the CPU cost of the Loewner framework while maintaining the accuracy of the standard approach for large values of the number of measurements. As the Loewner and shifted Loewner matrices satisfy Sylvester equations with diagonal coefficient matrices, they are, in fact, Cauchy-like matrices, obtained as the Hadamard product between a Cauchy matrix $\C$ and low-rank right-hand sides. Extensive research has been devoted to fully exploiting the rich structure of Cauchy matrices. Several algorithms for computing the matrix-vector product $\C \bx$ can be found in the literature and many avoid assembling the full matrix $\C$ (see, e.g., \cite{Pan2014,GREENGARD1987325,Gohberg1994,Carrier1988}). Hierarchically semiseparable matrices (HSS) have deemed efficient for approximating Cauchy matrices with a low off-diagonal rank \cite{PAN2015107,Pan2014}. HSS and other rank-structured matrices are widely used in developing fast algorithms for algebraic operations (matrix-vector multiplications, matrix factorizations, matrix inversion, etc., see, e.g., \cite{Vandebril2005,Pan2014,PAN2015107,Chandrasekaran2006,Xia2010} and references therein) used as building blocks for the solution of certain problems like linear systems of equations~\cite{XiaCGL10}, eigenvalue problems~\cite{VogXCB16}, linear and quadratic matrix equations~\cite{MassPR18,Kressner2019}, and many more. For our application, the approximation of the Cauchy matrix in HSS format considerably decreases the computational cost of matrix-vector products involving a linear combination of the Loewner and shifted Loewner matrices needed for the partial SVD computation, while avoiding to form them. All results involving HSS-matrices presented in this paper have been obtained by means of the {\tt hm-toolbox} \cite{hmtoolbox}.

The employment of an HSS-representation of $\C$ may introduce some inexactness in our scheme and this has to be taken into account in the iterative SVD computation.
The use of inexact matrix-vector products within iterative procedures has been the subject of numerous research papers: Krylov techniques for solving linear systems and matrix equations \cite{Simoncini2003,Bouras2005,Eshof2004,Kuerschner2019,Kuerschner2018}, eigenvalue problems \cite{Freitag2007,Simoncini2002}, or an inexact variant of the Lanczos bidiagonalization for the computation of leading singular triplets of a generic matrix function \cite{GAAF2017}. In our case, we do not need an accurate approximation of the singular triplets, but rather have meaningful spaces spanned by the computed left and right singular vectors so that the obtained reduced model inherits the desired approximation properties (see, e.g., \cite{IonitaPhD,TutorialLoewner}). 

The remainder of the paper is structured as follows. Section \ref{sect:review} provides a review of the Loewner framework, whereas section \ref{Exploiting the structure of L and S} presents results showcasing the special structure of the Loewner and shifted Loewner matrices as Cauchy-like matrices and their approximation as hierarchically semiseparable matrices allowing for efficient, inexact matrix-vector products in the partial SVD computation. Section \ref{Numerical results} presents the results of our numerical experiments and section \ref{sect:concl} concludes the paper. 

\section{Review of the Loewner framework}\label{sect:review}
The Loewner framework has been proposed to address the rational interpolation/approximation problem. In the control community, this is referred to as  system identification from frequency domain measurements and is stated below.  

\noindent
\begin{problem}[Rational approximation]\label{approx_pr}
Given pairs of points representing the frequency $f_j$, and the corresponding transfer function measurement at that frequency $\bH_j \in \CC^{p \times q}$ for a system with $q$ inputs and $p$ outputs:
\vspace{-.5mm}
\begin{equation} \label{eq:meas}
(f_j;\bH_j),~~j=1,\ldots,N,
\vspace{-.5mm}
\end{equation}
with $p$ and $q$ assumed to be much smaller than $N$, the problem amounts to finding the rational transfer function $\bH(\ls)$  which approximates the data:
 \begin{equation}\label{eq:ss}
 \bH_j \approx \bH(\ls=\ii \omega_j),~~\forall j=1,\ldots,N.
 \end{equation}
Thus, the transfer function  evaluated for the Laplace variable $\ls=\ii \omega_j$, where $\ii^2=-1$ and $\omega_j=2 \pi f_j$, should be close (in some norm) to the corresponding measurement $\bH_j$. Several equivalent representations are possible for the rational transfer function, namely pole-residue, pole-zero, state-space or descriptor-form.
\end{problem}

Most systems of interest are real, with their transfer function satisfying the complex conjugate condition $\bH(\bar \ls)=\overline{\bH(\ls)}$. Hence, we assume that the given data set satisfies this condition and is of the following form:
\begin{equation} \label{eq:meascc}
\left(\ii \omega_j,-\ii \omega_j;\bH_j,\overline{\bH}_j\right),~~j=1,\ldots,N.
\vspace{-.5mm}
\end{equation}

We proceed by presenting the Loewner framework as a solution scheme addressing the rational approximation Problem \ref{approx_pr}. The first step in the Loewner framework \cite{artAJMACA,SLACATCAD09} is partitioning the data in two disjoint sets. This partition influences the conditioning of the problem \cite[Ch. 2.1]{IonitaPhD} and finding the optimal partition for each data set is beyond the scope of this paper. The most natural partitions are summarized in the following (assuming an even number of measurements $N$ and frequencies sorted in ascending order): 
\begin{itemize}
    \item {\sc Half\&Half}: the first half of the data in one set and the other half in the second set:
\begin{equation} \label{eq:partitionfHH}
\left\{\ii \omega_{1},-\ii \omega_{1},\ldots,\ii \omega_{\frac{N}{2}},-\ii \omega_{\frac{N}{2}}\right\} \cup \left\{\ii \omega_{\frac{N}{2}+1},-\ii \omega_{\frac{N}{2}+1},\ldots,\ii \omega_{N},-\ii \omega_{N}\right\}
\end{equation}
and, correspondingly, 
\begin{equation} \label{eq:partitionmHH}
\left\{\bH_{1},\overline{\bH}_{1},\ldots,\bH_{\frac{N}{2}},\overline{\bH}_{\frac{N}{2}}\right\} \cup \left\{\bH_{\frac{N}{2}+1},\overline{\bH}_{\frac{N}{2}+1},\ldots,\bH_{N},\overline{\bH}_{N}\right\},
\end{equation}
     \item {\sc Odd\&Even}: data with odd indices in the first set and data with even indices in the second set:
\begin{equation} \label{eq:partitionfOE}
\left\{\ii \omega_{1},-\ii \omega_{1},\ldots,\ii \omega_{N-1},-\ii \omega_{N-1}\right\} \cup \left\{\ii \omega_{2},-\ii \omega_{2},\ldots,\ii \omega_{N},-\ii \omega_{N}\right\}
\end{equation}
and, correspondingly, 
\begin{equation} \label{eq:partitionmOE}
\left\{\bH_{1},\overline{\bH}_{1},\ldots,\bH_{N-1},\overline{\bH}_{N-1}\right\} \cup \left\{\bH_{2},\overline{\bH}_{2},\ldots,\bH_{N},\overline{\bH}_{N}\right\}.
\end{equation}
\end{itemize}
The first set on the right in \eqref{eq:partitionfHH} and \eqref{eq:partitionfOE} comprises the \emph{right points}, denoted by $\lambda_k$ , $k=1,\ldots,N$, while the second set comprises the \emph{left points} $\mu_h$ , $h=1,\ldots,N$.

The following step in the Loewner framework is choosing tangential directions as vectors which transform matrix data $\bH_j$ into vector data: \emph{right tangential directions} are column vectors $\br_k \in \CC^{q}$ such that $\bH_k \br_k=\bw_k$, whereas \emph{left tangential directions} are row vectors $\ell_h \in \CC^{1 \times p}$ such that $\ell_h \bH_h =\bv_h$.  The column vectors $\bw_k \in \CC^{p}$ are referred to as \emph{right vector data}, while the row vectors $\bv_h \in \CC^{1 \times q}$ are referred to as \emph{left vector data}. For simplicity, tangential directions can be chosen as alternating columns/rows of the identity matrix \cite{SLACATCAD09}, resulting in vector data being column and row vectors of the original matrix data $\bH_j$ in \eqref{eq:meas}. 

\begin{remark}
For scalar data obtained from single-input single-output (SISO) systems ($p = q = 1$), tangential directions $\br_k$, $\ell_h$ are simply equal to 1.
\end{remark}

\begin{remark}
If the loss of information due to utilizing a single tangential direction per measurement, instead of the whole matrix $\bH_j$, does not allow to obtain an accurate approximation, one can employ the original matrix $\bH_j$. This is equivalent to considering several tangential directions for the same point. To obtain block right matrix data for $\bH_j \in \mathbb{C}^{p \times q}$, the corresponding frequency should be repeated $q$ times as a right point and all columns of the identity matrix of size $q \times q$ should be considered as right directions. Similarly, to obtain block left matrix data for $\bH_j \in \mathbb{C}^{p \times q}$, the corresponding frequency should be repeated $p$ times as a left point and all rows of the identity matrix of size $p \times p$ should be considered as left directions.
\end{remark}

With this notation in place, the Loewner matrix is defined entry-wise as
\begin{equation}\label{eq:Loewner_def}
\LL_{hk} = \frac{\bv_h \br_k-\ell_h \bw_k}{\mu_h-\lambda_k},~~ h,k=1,\ldots,N,
\end{equation} 
and the shifted Loewner matrix is defined as
\begin{equation}\label{eq:shiftedLoewner_def}
\sL_{{hk}} = \frac{\mu_h\bv_h \br_k-\lambda_k\ell_h \bw_k}{\mu_h-\lambda_k},~~ h,k=1,\ldots,N.
\end{equation} 
Note that the numerators are scalar quantities as they are obtained by taking inner products.

The quantities defined previously are collected into the following matrices
\begin{align}\label{eq:right}
\bm \Lambda &=\mbox{diag} \left(\left[ \begin{array}{ccc}
\lambda_1,&\ldots&,\lambda_{N}
\end{array}\right]\right)\in \CC^{N \times N},&
\bR&=\left[\begin{array}{ccc}
\br_1,&\ldots&,\br_{N}
\end{array}\right]\in \CC^{q \times N},& 
\bW&=\left[\begin{array}{ccc}
\bw_1,&\ldots&,\bw_{N}
\end{array}\right]\in \CC^{p \times N},&
\notag\\ 
\bm M &=\mbox{diag}\left(\left[\begin{array}{ccc}
\mu_1,&\ldots&,\mu_{N}
\end{array}\right]\right)\in \CC^{N \times N},& 
\bL&=\left[\begin{array}{c}
\ell_1\\
\vdots\\
\ell_{N}
\end{array}\right]\in \CC^{N \times p},& 
\bV&=\left[\begin{array}{c}
\bv_1\\
\vdots\\
\bv_{N}
\end{array}\right]\in \CC^{N \times q},&
\end{align} \vspace{-5mm}
\begin{align}\label{eq:LLsL}
\LL &= \left[\begin{array}{ccc} 
\frac{\bv_1 \br_1-\ell_1 \bw_1}{\mu_1-\lambda_1}& \ldots &\frac{\bv_1 \br_N-\ell_1 \bw_N}{\mu_1-\lambda_N}\\
\vdots&\ddots&\vdots\\
\frac{\bv_N \br_1-\ell_N \bw_1}{\mu_N-\lambda_1}& \ldots &\frac{\bv_N \br_N-\ell_N \bw_N}{\mu_N-\lambda_N}
\end{array}\right]\in \CC^{N \times N},&
\sL &= \left[\begin{array}{ccc} 
\frac{\bv_1 \br_1-\ell_1 \bw_1}{\mu_1-\lambda_1}& \ldots &\frac{\bv_1 \br_N-\ell_1 \bw_N}{\mu_1-\lambda_N}\\
\vdots&\ddots&\vdots\\
\frac{\bv_N \br_1-\ell_N \bw_1}{\mu_N-\lambda_1}& \ldots &\frac{\bv_N \br_N-\ell_N \bw_N}{\mu_N-\lambda_N}
\end{array}\right]\in \CC^{N \times N}.
\end{align} 

By construction, the Loewner and shifted Loewner matrices satisfy the following Sylvester equations:
\begin{align}\label{eq:Sylv}
\bm M \LL-\LL \bm \Lambda  &= \bV\bR-\bL \bW,&
\bm M \sL-\sL \bm \Lambda  &= \bm M\bV\bR-\bL \bW \bm \Lambda,
\end{align}
as well as the following relations:
\begin{align}\label{eq:relation_LLandSL}
\sL-\LL \bm \Lambda &= \bV\bR,&
\sL- \bm M \LL &= \bL \bW,
\end{align}
which will prove useful in our proposed matrix-free matrix-vector product approach.

Assuming that the data is generated from a real system \eqref{eq:meascc}, to avoid complex arithmetic, a change of basis can be performed. By defining
\begin{equation}\label{eq:changeofbasis}
   \bm \Pi = \frac{1}{\sqrt{2}}\left[\begin{array}{rr} 
1& -\ii \\1&\ii
\end{array}\right] \mbox{ and } \bP = \mbox{blkdiag} \left(\left[ \begin{array}{ccc} 
\bm \Pi,&\ldots&,\bm \Pi
\end{array}\right]\right)\in \CC^{N \times N},
\end{equation}
we obtain matrices with real entries:
\begin{equation*}%
\bm \Lambda_r\hspace{-.5mm}:=\hspace{-.5mm}\bP^*\hspace{-.5mm}\bm \Lambda \bP,~ \bm M_r \hspace{-.5mm}:=\hspace{-.5mm}\bP^*\hspace{-.5mm}\bm M \bP,~ \bL_r\hspace{-.5mm}:=\hspace{-.5mm}\bP^*\bL,~ \bV_r\hspace{-.5mm}:=\hspace{-.5mm}\bP^*\bV,~\bR_r\hspace{-.5mm}:=\hspace{-.5mm}\bR \bP,~\bW_r\hspace{-.5mm}:=\hspace{-.5mm}\bW \bP,~\LL_r\hspace{-.5mm}:=\hspace{-.5mm}\bP^*\LL \bP,~ \sL_r \hspace{-.5mm}:=\hspace{-.5mm}\bP^*\sL \bP,
\end{equation*}
where $\bP^*$ stands for the complex conjugate transpose of the matrix $\bP$ and $\bP^{-1} = \bP^*$.  These quantities satisfy the same equations as in \eqref{eq:Sylv} and \eqref{eq:relation_LLandSL}. 
Unfortunately, $\bm \Lambda_r$ and $\bm M_r$ are no longer diagonal and this represents a major drawback in taking advantage of the Sylvester equations \eqref{eq:Sylv} for a fast computation of $\LL_r$ and $\sL_r$. 
However, $\bm \Lambda_r^2$ and $\bm M_r^2$ are diagonal and given by
\begin{align}\label{eq:matrices_square}
  \bm \Lambda_r^2&=  \mbox{blkdiag} \left[ \begin{array}{rr}
  -\omega_k^2&0\\
  0&-\omega_k^2
  \end{array}\right],\; k=1,\ldots,N,& 
  \bm M_r^2&=  \mbox{blkdiag} \left[ \begin{array}{rr}
  -\omega_h^2&0\\
  0&-\omega_h^2
  \end{array}\right],\; h=1,\ldots,N.
  \end{align}
By multiplying the first equation in \eqref{eq:Sylv} by $\bm M_r$ on the left and, afterwards, multiplying it by $\bm \Lambda_r$ on the right and adding the results together, a new Sylvester equation with diagonal coefficient matrices is obtained:
\begin{align}\label{eq:Sylv_realLL}
\bm M_r^2 \LL_r-\LL_r \bm \Lambda_r^2  &= \bm M_r\left(\bV_r\bR_r-\bL_r \bW_r\right)+\left(\bV_r\bR_r-\bL_r \bW_r\right)\bm \Lambda_r.
\end{align}
By performing the same operations on the second equation in \eqref{eq:Sylv}, a similar Sylvester equation is obtained for the shifted Loewner matrix:
\begin{align}\label{eq:Sylv_realsL}
\bm M_r^2 \sL_r-\sL_r \bm \Lambda_r^2  &= \bm M_r\left(\bm M_r\bV_r\bR_r-\bL_r \bW_r \bm \Lambda_r\right)+ \left(\bm M_r\bV_r\bR_r-\bL_r \bW_r \bm \Lambda_r\right)\bm \Lambda_r.
\end{align}
In the following, we say we employ the {\sc Odd\&Even (real)} partition whenever the approach above is adopted.

After introducing notation, we are ready to state the solution provided by the Loewner framework  to the approximation Problem \ref{approx_pr}. A (non minimal) model for the transfer function in descriptor-form $\bH(\ls) = \bC \left(\ls\bE-\bA\right)^{-1}\bB+\bD$  is given by
\begin{equation}
\bH(s) = \bW \left(\sL-s \LL \right)^{-1} \bV,
\end{equation}
or, alternatively, by $\bH(s) = \bW_r \left(\sL_r-s \LL_r \right)^{-1} \bV_r$ if real arithmetic was enforced. As we have recast the original problem as a tangential interpolation problem, this transfer function satisfies the right and left interpolation conditions \cite{artAJMACA} $\bH(\lambda_k) \br_k\hspace{-1mm}=\hspace{-1mm}\bw_k$ and $\ell_h \bH(\mu_h)\hspace{-1mm}=\hspace{-1mm}\bv_h$, $h,k=1,\ldots,N$ exactly. To obtain a minimal model, we perform a singular value decomposition 
\begin{equation} \label{eq:SVD}
[\bY,\bm \Sigma,\bX] = \mbox{svd}(\sL - x \LL),~~x \in \{f_i\},
\end{equation}
where $\bm \Sigma$ is diagonal and $\bY$,  $\bX$ contain the left and right singular vectors, respectively.
Choosing the order $n$ of the truncated SVD ($n$ is application-dependent), we define (in Matlab notation) $\bX_n = \bX(:,1\hspace{-1mm}:\hspace{-1mm}n)$ and $\bY_n \hspace{-1mm}=\hspace{-1mm} \bY(:,1\hspace{-1mm}:\hspace{-1mm}n)^*$. Finally, the model of size $n$ in descriptor form is  
\begin{equation} \label{eq:E_D0}
\bE = -\bY_n \LL \bX_n=-\LL_n,~
\bA= -\bY_n \sL \bX_n=-\sL_{n},~
\bB = \bY_n \bV=\bV_n,~
\bC = \bW \bX_n=\bW_n,~
\bD=\bfz. 
\end{equation} 

When employing real arithmetic, the SVD trunctation step is analogous, in terms of the $\LL_r$, $\sL_r$, $\bV_r$ and $\bW_r$ matrices. In the following section, we exploit the Cauchy-like structure of the  Loewner and shifted Loewner matrices to design efficient approaches, both in terms of memory storage and CPU time, to compute the SVD in \eqref{eq:SVD} by making use of hierarchical matrices.   

\section{Exploiting the structure of $\mathbb{L}$ and $\mathbb{S}$}\label{Exploiting the structure of L and S}
For data sets with a sizable number $N$ of measurements $\mathbf{H}_j$, the construction of the large, dense Loewner and shifted Loewner matrices 
is demanding, both in terms of computational efforts as well as storage requirements. The computation of each entry of $\LL$ and $\sL$ using \eqref{eq:Loewner_def} and \eqref{eq:shiftedLoewner_def} yields a total cost of $\mathcal{O}(N^2\left(p+q\right))$ floating point operations (FLOPs) for assembling the entire $\LL$ and $\sL$ matrices. The number of nonzero entries in $\LL$ and $\sL$ is $\mathcal{O}(N^2)$, much larger than the memory requirements for storing the data in $\bm{\Lambda}$, $\bm{M}$, $\bR$, $\bW$, $\bL$, and $\bV$\footnote{The number of nonzero entries in the data matrices $\bV$ and $\bR$ amounts to $\mathcal{O}(qN)$ and to $\mathcal{O}(pN)$ for $\bW$ and $\bL$.}. Besides these excessive storage requirements, there are also considerations to be made regarding the CPU time required for the SVD computation of the matrix $\mathbb{S}-x\mathbb{L}$, $x\in\{f_i\}$ in \eqref{eq:SVD}. Especially for large dimensional problems, for which we expect a fast decay, it is preferred to compute only the first $n$ singular triplets, thus avoiding wasting resources in computing the full SVD. To this end, many iterative methods have been developed for computing partial SVDs; see, e.g., \cite{Sto12,Hoc01,BagRei05,Lar98} to name a few. The bottleneck in these approaches is the matrix-vector product with the coefficient matrix, namely $\sL-x\LL$ in our case. This operation costs $\mathcal{O}(N^2)$ FLOPs due to the dense pattern of $\sL-x\LL$.

This section tackles the cost reduction of performing a matrix-vector product with $\sL-x\LL$ while avoiding the explicit allocation of $\mathbb{L}$ and $\mathbb{S}$. The proposed strategy is supported by a thorough analysis of the computational cost, showing that, for very large data sets for which carrying out the full SVD is intractable, our strategy leads to remarkable reductions in both the computational efforts and the storage demand for building minimal realizations in the Loewner framework.


\subsection{Hadamard product and Cauchy matrices}\label{Hadamard product and Cauchy matrices}

We present novel results which exploit the particular structure of the Loewner and shifted Loewner matrices. These developments involve the Sylvester equations \eqref{eq:Sylv} with diagonal coefficient matrices $\bm \Lambda$ and $\bM$. 

\begin{Theorem}\label{Th:expression_LLandsL}
The Loewner and shifted Loewner matrices $\mathbb{L}$ and $\mathbb{S}$ satisfying the Sylvester equations in~\eqref{eq:Sylv} are such that 
\begin{equation}\label{eq:LLCauchy}
\LL=\sum_{j=1}^q \mbox{diag}(\widetilde \bv_j)\C\mbox{diag}(\widetilde \br_j^*)-
\sum_{j=1}^p \mbox{diag}(\widetilde \ell_j)\C\mbox{diag}(\widetilde \bw_j^*),\end{equation}
and
\begin{equation}\label{eq:sLCauchy}
\sL=\sum_{j=1}^q \mbox{diag}(\bm M\widetilde \bv_j)\C\mbox{diag}(\widetilde \br_j^*)-\sum_{j=1}^p \mbox{diag}(\widetilde \ell_j)\C\mbox{diag}( \bm {\Lambda}^*\widetilde \bw_j^*),
\end{equation}
where $\C$ denotes the following Cauchy matrix
\begin{align*}
\C = \left[\begin{array}{ccc} 
\frac{1}{\mu_1-\lambda_1}& \ldots &\frac{1}{\mu_1-\lambda_N}\\
\vdots&\ddots&\vdots\\
\frac{1}{\mu_N-\lambda_1}& \ldots &\frac{1}{\mu_N-\lambda_N}
\end{array}\right],
\end{align*} 
while the vectors $\widetilde \bv_j\in\CC^{N}$ and $\widetilde \ell_j\in\CC^{N}$ denote the $j$-th columns of $\bV$ and $\bL$, respectively, so that
$$\bm V=[\widetilde \bv_1,\ldots, 
\widetilde \bv_q],\quad \bm L=[\widetilde \ell_1,\ldots, 
\widetilde \ell_q].$$
Similarly, the vectors $\widetilde \br_j\in \CC^{1 \times N}$ and $\widetilde \bw_j\in \CC^{1 \times N}$ are the $j$-th rows of $\bR$ and $\bW$, respectively, namely 
$$\bm R=\left[\begin{array}{c}
     \widetilde \br_1  \\
     \vdots \\
     \widetilde \br_q\\
\end{array}\right], \quad 
\bm W=\left[\begin{array}{c}
     \widetilde \bw_1  \\
     \vdots \\
     \widetilde \bw_q\\
\end{array}\right].
$$
\end{Theorem}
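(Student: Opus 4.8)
The plan is to prove the two formulas by a direct entry-wise computation, verifying that the right-hand sides reproduce exactly the definitions \eqref{eq:Loewner_def} and \eqref{eq:shiftedLoewner_def}. First I would fix indices $h,k\in\{1,\ldots,N\}$ and compute the $(h,k)$ entry of the right-hand side of \eqref{eq:LLCauchy}. For a generic term $\mbox{diag}(\bx)\,\C\,\mbox{diag}(\by^*)$, the $(h,k)$ entry is $x_h\,\C_{hk}\,\overline{y_k} = \frac{x_h\,\overline{y_k}}{\mu_h-\lambda_k}$. Applying this with $\bx=\widetilde\bv_j$, $\by=\widetilde\br_j^*$ (so $\by^* = \widetilde\br_j$, keeping careful track of the conjugation conventions established in the statement, where $\widetilde\br_j$ is the $j$-th \emph{row} of $\bR$), summing over $j=1,\ldots,q$ gives $\frac{1}{\mu_h-\lambda_k}\sum_{j=1}^q (\widetilde\bv_j)_h (\widetilde\br_j)_k = \frac{1}{\mu_h-\lambda_k}\,(\bV\bR)_{hk}$, since $\sum_j (\bV)_{hj}(\bR)_{jk} = (\bV\bR)_{hk}$. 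Recalling that $(\bV\bR)_{hk} = \bv_h\br_k$ by the block definitions in \eqref{eq:right}, the first sum contributes $\frac{\bv_h\br_k}{\mu_h-\lambda_k}$. The second sum handled identically gives $\frac{\ell_h\bw_k}{\mu_h-\lambda_k}$, and subtracting yields precisely $\LL_{hk}$ as in \eqref{eq:Loewner_def}.

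Next I would repeat the computation for \eqref{eq:sLCauchy}. The only change is the presence of $\bm M$ and $\bm\Lambda^*$ inside the diagonal factors. Since $\bm M$ is diagonal with entries $\mu_h$, the term $\mbox{diag}(\bm M\widetilde\bv_j)$ has $h$-th diagonal entry $\mu_h(\widetilde\bv_j)_h$, so the first sum produces $\frac{\mu_h\,(\bV\bR)_{hk}}{\mu_h-\lambda_k} = \frac{\mu_h\,\bv_h\br_k}{\mu_h-\lambda_k}$. For the second sum, $\mbox{diag}(\bm\Lambda^*\widetilde\bw_j^*)$ contributes the factor $\overline{\lambda_k}\,\overline{(\widetilde\bw_j)_k}$ in the $k$-th slot; combined with the earlier conjugation this reconstructs $\lambda_k\,(\bL\bW)_{hk} = \lambda_k\,\ell_h\bw_k$ (here one uses that the stated conventions are arranged precisely so the conjugations cancel and the $\lambda_k$ survives). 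Subtracting gives $\frac{\mu_h\bv_h\br_k - \lambda_k\ell_h\bw_k}{\mu_h-\lambda_k} = \sL_{hk}$ as in \eqref{eq:shiftedLoewner_def}. Alternatively, and perhaps more cleanly, one can derive \eqref{eq:sLCauchy} from \eqref{eq:LLCauchy} using the relations \eqref{eq:relation_LLandSL}: writing $\sL = \LL\bm\Lambda + \bV\bR$ and noting that multiplying $\mbox{diag}(\widetilde\bv_j)\C\mbox{diag}(\widetilde\br_j^*)$ on the right by $\bm\Lambda$ amounts to replacing $\C$ by $\C\bm\Lambda$, together with the identity $\bm M\C - \C\bm\Lambda = \bfz\bfz^T + \mbox{(rank-one correction)}$... — but since this route requires unwinding a Cauchy-displacement identity, the direct entry-wise verification is likely the most transparent and is what I would present.

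The genuine subtlety — and the one place where care is needed rather than routine computation — is bookkeeping the conjugations and the row/column conventions. The statement writes $\widetilde\br_j$ as a \emph{row} vector (the $j$-th row of $\bR\in\CC^{q\times N}$), yet places $\widetilde\br_j^*$ inside a $\mbox{diag}(\cdot)$, which requires interpreting $\widetilde\br_j^*$ as a column vector of length $N$. Thus $\mbox{diag}(\widetilde\br_j^*)$ has $k$-th diagonal entry $\overline{(\bR)_{jk}}$, and the $(h,k)$ entry of $\mbox{diag}(\widetilde\bv_j)\C\mbox{diag}(\widetilde\br_j^*)$ is $(\bV)_{hj}\,\C_{hk}\,\overline{(\bR)_{jk}}$ — so to recover $(\bV\bR)_{hk} = \sum_j (\bV)_{hj}(\bR)_{jk}$ one needs the complex conjugate of $(\bR)_{jk}$ rather than $(\bR)_{jk}$ itself. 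This means the formula as literally written computes a \emph{conjugated} version of $\bV\bR$ unless one reads $\widetilde\br_j^*$ in the natural ``$\mbox{diag}$ of a column'' sense that undoes an implicit conjugate, or unless the data is real (e.g., after the {\sc Odd\&Even (real)} change of basis). The clean way to finish is to make this convention explicit at the outset: declare that for a row vector $\mathbf{z}$, the symbol $\mbox{diag}(\mathbf{z}^*)$ denotes the diagonal matrix whose $(k,k)$ entry is the $k$-th component of $\mathbf{z}$ (the conjugate-transpose turns the row into a column, and we take its entries as-is), so that the Hadamard factorisation $(\mbox{diag}(\mathbf{a})\,\C\,\mbox{diag}(\mathbf{b}))_{hk} = a_h\,\C_{hk}\,b_k$ holds with $a_h = (\bV)_{hj}$ and $b_k = (\bR)_{jk}$. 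Once this is pinned down, both \eqref{eq:LLCauchy} and \eqref{eq:sLCauchy} follow by the one-line entry-wise check sketched above, and no further work is required since the Sylvester equations \eqref{eq:Sylv} were already shown to be equivalent to the entry-wise definitions.
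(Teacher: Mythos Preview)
Your proof is correct and, at its core, is the same argument the paper gives: the paper observes that $\LL=\C\circ(\bV\bR-\bL\bW)$ and $\sL=\C\circ(\bm M\bV\bR-\bL\bW\bm\Lambda)$, then invokes the rank-one Hadamard identity $\C\circ(\bx\by^*)=\mbox{diag}(\bx)\,\C\,\mbox{diag}(\by^*)$ and expands $\bV\bR=\sum_j\widetilde\bv_j\widetilde\br_j$, etc. Your entry-wise computation is precisely the verification of that identity unrolled, so the two presentations differ only in packaging---the paper states the Hadamard lemma once and applies it, while you verify the resulting formula componentwise.

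One useful difference: your careful bookkeeping of the conjugation in $\mbox{diag}(\widetilde\br_j^*)$ is a point the paper's proof glosses over. The paper's Hadamard identity is stated for column vectors $\bx,\by$ and then applied with $\widetilde\br_j$ a \emph{row} vector, which does leave the literal reading of $\mbox{diag}(\widetilde\br_j^*)$ with a spurious conjugate, exactly as you note. Your resolution---fixing the convention that $\mbox{diag}(\mathbf{z}^*)$ for a row $\mathbf{z}$ places the un-conjugated components on the diagonal---is the right way to make the statement precise, and is implicitly what the paper intends. So your proposal is both correct and slightly more careful on this point; you can present the entry-wise argument as written and, if you like, remark that it is equivalent to the Hadamard-product formulation $\LL=\C\circ(\bV\bR-\bL\bW)$, which the paper later exploits for the rank and norm estimates in Remarks~\ref{remark:numericalrankC}--\ref{remark:newn}.
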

\begin{proof}
The Loewner and shifted Loewner matrices $\LL$ and $\sL$
are Cauchy-like matrices as they are obtained by taking the Hadamard product $\circ$ between the Cauchy matrix $\C$ 
and the right-hand sides of the Sylvester equations in \eqref{eq:Sylv}. In particular,
\begin{align}
\LL &= \C \circ\left(\bV\bR-\bL \bW\right),&
\sL &= \C \circ\left(\bm M\bV\bR-\bL \bW \bm \Lambda\right).
\end{align}
An important property of the Hadamard product reads as follows. For any vectors $\bx,\by\in\mathbb{C}^{N}$, it holds
$$\C\circ(\bx\by^*)=\text{diag}(\bx)\C\text{diag}(\by^*). $$
This, along with the low-rank structure of $\bV\bR-\bL \bW $ and $\bm M\bV\bR-\bL \bW \bm \Lambda$, yields the results in \eqref{eq:LLCauchy} and~\eqref{eq:sLCauchy}.
\end{proof}
\begin{Cor}\label{Cor_matrixvecprod}
Given a vector $\by \in\mathbb{C}^{N}$ and $x\in\{f_i\}$, we have 
$$(\sL-x\LL)\by=\sum_{j=1}^q \widetilde \bv_j\circ(\C(\widetilde \br_j^*\circ\widetilde \by))-\sum_{j=1}^p\widetilde \ell_j\circ(\C(\widetilde \bw_j^*\circ\widetilde \by))+\bV \bR \by,$$
where $\widetilde \by=(\bm \Lambda-x\bI)\by$, with $\bI$, the identity matrix. 
\end{Cor}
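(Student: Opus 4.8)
The plan is to derive the matrix-vector formula directly from the closed-form expressions for $\LL$ and $\sL$ established in Theorem~\ref{Th:expression_LLandsL}, together with the relation $\sL-\LL\bm\Lambda=\bV\bR$ from \eqref{eq:relation_LLandSL}. First I would write $\sL - x\LL = (\sL - \LL\bm\Lambda) + \LL\bm\Lambda - x\LL = \bV\bR + \LL(\bm\Lambda - x\bI)$, so that applying this operator to $\by$ gives $(\sL-x\LL)\by = \bV\bR\by + \LL\widetilde\by$ with $\widetilde\by := (\bm\Lambda - x\bI)\by$. This is the key reduction: it trades the shifted Loewner matrix for the Loewner matrix acting on a modified vector, at the negligible cost of scaling the entries of $\by$ (since $\bm\Lambda$ is diagonal) and one low-rank correction $\bV\bR\by$.

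Next I would substitute the Cauchy-like expansion \eqref{eq:LLCauchy} for $\LL$ and evaluate $\LL\widetilde\by$ term by term. For a single summand $\mbox{diag}(\widetilde\bv_j)\,\C\,\mbox{diag}(\widetilde\br_j^*)\,\widetilde\by$, I would use that $\mbox{diag}(\widetilde\br_j^*)\widetilde\by = \widetilde\br_j^* \circ \widetilde\by$ (componentwise product with the vector of diagonal entries) and that left-multiplication by $\mbox{diag}(\widetilde\bv_j)$ is again a componentwise product, giving $\widetilde\bv_j\circ\bigl(\C(\widetilde\br_j^*\circ\widetilde\by)\bigr)$. Summing over $j$ for the two groups of terms reproduces exactly the first two sums in the claimed identity, and the residual $\bV\bR\by$ term is precisely the low-rank piece computed above. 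Assembling these yields the stated formula.

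The only subtlety — and the main thing to get right rather than a genuine obstacle — is bookkeeping the dimensions and the role of the tilde-rows $\widetilde\br_j$, $\widetilde\bw_j$ versus the tilde-columns $\widetilde\bv_j$, $\widetilde\ell_j$: one must check that $\widetilde\br_j^*\circ\widetilde\by$ is the componentwise product of two length-$N$ column vectors (so $\widetilde\br_j^*$ is the conjugate transpose of the $j$-th row of $\bR$, a column vector in $\CC^N$), that $\C$ maps $\CC^N\to\CC^N$, and that the outer componentwise multiplication by $\widetilde\bv_j\in\CC^N$ preserves this. The identity $\C\circ(\bx\by^*)=\mbox{diag}(\bx)\C\,\mbox{diag}(\by^*)$ from the proof of Theorem~\ref{Th:expression_LLandsL} guarantees that the diag-$\C$-diag products are the right interpretation of the Hadamard factorization, so no new identity is needed. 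Finally I would remark — though this lies outside the bare statement — that this representation is exactly what makes the matrix-vector product cheap: it requires only $p+q$ applications of $\C$ (each $\mathcal{O}(N\log N)$ in the HSS format) plus $\mathcal{O}((p+q)N)$ work for the componentwise scalings and the low-rank update, never forming $\LL$ or $\sL$.
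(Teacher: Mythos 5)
Your argument is correct and coincides with the paper's own proof: both use the relation $\sL-\LL\bm\Lambda=\bV\bR$ from \eqref{eq:relation_LLandSL} to write $(\sL-x\LL)\by=\LL(\bm\Lambda-x\bI)\by+\bV\bR\by$, and then substitute the Cauchy-like expansion of $\LL$ from Theorem~\ref{Th:expression_LLandsL}, interpreting the diagonal scalings as componentwise products. No differences worth noting.
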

\begin{proof}
Thanks to \eqref{eq:relation_LLandSL}, we can write
\begin{align*}
(\sL-x\LL)\by=& (\LL\bm\Lambda + \bV\bR -x\LL)\by=\LL(\bm\Lambda-x\bI)\by+  \bV\bR \by.
\end{align*}
The result follows by substituting the expression of $\LL$ given in Theorem~\ref{Th:expression_LLandsL} in the equation above.

\end{proof}

Similar results to those in Theorem~\ref{Th:expression_LLandsL} and Corollary \ref{Cor_matrixvecprod} can be obtained for $\LL_r$ and $\sL_r$ solving the Sylvester equations in \eqref{eq:Sylv_realLL}
and \eqref{eq:Sylv_realsL}, respectively. The developments follow the same line of proof as above with straightforward adjustments.

Corollary \ref{Cor_matrixvecprod} shows that the majority of the computational cost of performing the matrix-vector multiplication $(\sL-x\LL)\by$ amounts to computing $p+q$ matrix-vector products with the Cauchy matrix $\C$.

Extensive research has been devoted to fully exploiting the rich structure of Cauchy matrices. Several algorithms for computing the matrix-vector product $\C \by$ can be found in the literature and many avoid assembling the full matrix $\C$ (see, e.g., \cite{Pan2014,GREENGARD1987325,Gohberg1994,Carrier1988}). In the next section we 
recall the strategy presented by Pan in \cite{PAN2015107,Pan2014} to represent $\C$ in terms of a hierarchically semiseparable (HSS) matrix. Even though the novel scheme proposed in this paper does not depend on the strategy employed for performing the matrix-vector product $\C \by$ -- as long as it is efficient -- we believe that the HSS framework may be advantageous as, in principle, many matrix-vector products with $\C$ are needed for computing a (partial) SVD of the matrix $\sL-x\LL$.

We conclude this section with the following remarks.

\begin{remark}\label{remark:numericalrankC}
The number $n$ of singular triplets needed to be computed to achieve the minimal realization $(\bE,\bA,\bB,\bC,\bD)$ in \eqref{eq:E_D0} is difficult to estimate a-priori\footnote{In \cite[Section 4.3]{Beckermann2019}, some results on the numerical rank of $\LL$ are presented provided \mbox{$[\min\lambda_k,\max\lambda_k]\cap [\min\mu_h,\max\mu_h]=\emptyset$.}}. However, the expression of $\LL$ and $\sL$ in terms of the Hadamard product can be useful to this end. Indeed, another important property of the Hadamard product is that, for any matrices $\bA$ and $\bB$, $\text{rank}(\bA\circ \bB)\leq\text{rank}(\bA)\text{rank}(\bB)$. Therefore,
\begin{align*}
\text{rank}(\LL) &\leq\text{rank}(\C)\cdot\text{rank}(\bV\bR-\bL \bW)\leq (p+q)\cdot\text{rank}(\C),
\end{align*}
and similarly for $\sL$. Thus, we have 
\begin{align}\label{eq:bound_rank_L}
\text{rank}(\sL-x\LL) &\leq2(p+q)\cdot\text{rank}(\C), \quad \forall\,x\in\{f_i\}.
\end{align}

In general, the Cauchy matrix $\C$ is full rank so this inequality is trivially satisfied. However, depending on the partitioning of the points into $\lambda_k$ and $\mu_h$ (as in \eqref{eq:partitionfHH} and \eqref{eq:partitionfOE}), it can be \emph{numerically} low-rank (see, e.g., \cite[Theorem 5]{Pan2014}, \cite{Beckermann2019,Chandrasekaran2007}).
If $\pi_\C$ denotes the numerical rank of $\C$, then $2(p+q)\pi_\C$ is a rough estimate for the numerical rank of $\sL-x\LL$\footnote{For $\LL_r$ and $\sL_r$ satisfying \eqref{eq:Sylv_realLL} and \eqref{eq:Sylv_realsL}, the value $4(p+q)\pi_\C$ can be used as an estimate for the number of singular triplets to compute.}. Oftentimes, the underlying dynamical system is of much lower complexity, thus allowing for the computation of a minimal realization of reduced order $n$. One can also use insight of the system itself or count the number of peaks in the frequency response to estimate $n$ (for systems with poles having dominant imaginary parts). 
\end{remark}

\begin{remark}
The expression of $\LL$ and $\sL$ in terms of the Hadamard product provides us with an upper bound of the spectral norm of the Loewner and shifted Loewner matrix. 
Indeed, the spectral norm is submultiplicative with respect to the Hadamard product \cite[Theorem 5.5.1]{Horn1991}, hence 
$$
\begin{array}{rll}
    \|\LL\|&=&\|\C\circ(\bV\bR-\bL \bW)\|\leq\|\C\|\cdot\|\bV\bR-\bL \bW\|
    \\
   & \leq&
    \|\C\|_F\cdot\|\bV\bR-\bL \bW\|
    =
    \|\bV\bR-\bL \bW\|\sqrt{\sum_{i=1}^N\sum_{j=1}^N\left|\frac{1}{\mu_i-\lambda_j}\right|^2},\\
\end{array}
$$
where $\|\C\|_F$ denotes the Frobenius norm of $\C$. Note that $\|\bV\bR-\bL \bW\|$ can be computed cheaply, e.g., by a power method exploiting the low rank of $\bV\bR-\bL \bW$.

Similarly, 
$$\|\sL\|\leq \|\bM\bV\bR-\bL \bW\bm \Lambda\|\sqrt{\sum_{i=1}^N\sum_{j=1}^N\left|\frac{1}{\mu_i-\lambda_j}\right|^2}.$$
\end{remark}

\begin{remark}
Low-rank approximations to $\LL$ and $\sL$ may be computed by adaptive cross approximation \cite{ACA}, particularly suited for hierarchical matrices, the CUR decomposition~\cite{CUR} as in \cite{LoewCUR, 9073015}, or related schemes. These approaches select a certain number of columns and rows of the original matrices in a greedy fashion based on various heuristics, and a \emph{core} matrix is utilised to compute a low-rank approximation. If a given threshold on the desired accuracy of the computed approximation is provided as an input, these algorithms often construct matrices whose rank is much larger than the one of the target matrices $\LL$ and $\sL$. On the other hand, by fixing the rank $k$ of the approximation, $k\approx \mathtt{rank}(\LL),\mathtt{rank}(\sL)$ - assuming we know an estimate of $\mathtt{rank}(\LL)$, $\mathtt{rank}(\sL)$ - the accuracy we achieve may be very low affecting the reliability of the computed reduced models.
\end{remark}
\subsection{Hierarchically semiseparable (HSS) representation of a Cauchy matrix}\label{HSS representation of a Cauchy matrix}

The literature on HSS matrices is rather vast and technical (see, e.g., \cite{Vandebril2005,Pan2014,PAN2015107,Chandrasekaran2006,Xia2010} and references therein). Here we recall only the main properties of this class of matrices and their role in the efficient representation of Cauchy matrices. Such a technique is also closely related to the Fast Multipole Method (FMM). We refer the interested reader to, e.g.,  \cite{Chandrasekaran2007,Chandrasekaran2006} for more details on the interconnection between HSS matrices and FMM.

\begin{Definition}[{\protect{\cite[Definition 27]{PAN2015107}}}]\label{Def:hss}
Let $\bA$ be an $N\times N$ matrix with $\alpha$ being the maximum rank of all its subdiagonal blocks, namely the blocks of all sizes lying strictly below the block diagonal, and $\beta$ the  maximum rank of all its superdiagonal blocks, namely the blocks of all sizes lying strictly above the block diagonal, respectively.
Then, $\bA$ is $(\alpha,\beta)$-HSS if its diagonal blocks consist of $\mathcal{O}((\alpha+\beta)N)$ entries. 
\end{Definition}

The $(\alpha,\beta)$-HSS representation of a matrix $\bA$ is very advantageous whenever $\alpha$ and $\beta$ are small. 
For instance, it allows us to express $\bA$ in terms of $\mathcal{O}((\alpha+\beta)N)$ parameters avoiding storing its $N^2$ entries. Moreover, a whole, efficient HSS arithmetic has been developed in the last decades (see, e.g., \cite{Chandrasekaran2006,Xia2010}). For instance, the computational cost of the matrix-vector product $\bA\by$ amounts to $\mathcal{O}((\alpha+\beta)N)$ FLOPs. 
If $\bA$ is nonsingular, its inverse is also a $(\alpha,\beta)$-HSS matrix that can be computed in $\mathcal{O}((\alpha+\beta)^3N)$ FLOPs (see, e.g., \cite[Section 6]{PAN2015107}). 

To fully exploit the HSS framework for our purposes, we wish to represent the Cauchy matrix $\C$ in terms of a HSS matrix with a low off-diagonal rank. In light of Corollary~\ref{Cor_matrixvecprod}, this would considerably decrease the computational cost of the matrix-vector products involving $\sL-x\LL$ while avoiding forming the dense matrices $\sL$ and $\LL$.  

The construction of an HSS approximation $\widetilde\C$ to $\C$ is rather involved and the magnitude of the $(\alpha,\beta)$-rank of the computed $\widetilde\C$ strictly depends on the partitioning of the frequencies along with the accuracy that has been selected for the actual computation of $\widetilde\C$\footnote{Roughly speaking, such a threshold is related to the computation of the low-rank approximations to the off-diagonal blocks of $\C$ (see, e.g., \cite[Corollary 4.3]{Xi2014}, \cite[Theorem 4.7]{Kressner2019}).} (see, e.g., \cite[Section 8]{PAN2015107} for further details on the computation of an HSS-representation of a Cauchy matrix). In this paper we employ the readily available {\tt hm-toolbox} \cite{hmtoolbox}.


\begin{num_example}\label{Ex.1}
{\rm
We investigate the impact of the most commonly-used frequency partitions ({\sc Half\&Half}, {\sc Odd\&Even}, {\sc Odd\&Even (Real)}) on the HSS-rank of the computed $\widetilde\C$ for a mechanical structure. We emphasize that the most effective partition is problem-dependent and is still an open problem, beyond the scope of this paper. We consider the \emph{Flexible Aircraft} data set \cite{POUSSOTVASSAL2018559} from the {\sc MORwiki}~\cite{morWiki}.
This dataset contains 421 frequency values $\omega_j$ expressed in rad/s and the corresponding measurements of the transfer function $\bH_j$. We disregard the last data point and consider the remaining frequencies ranging from $f_1 = 0.1$Hz to $f_{420}=42$Hz. As this is a mechanical structure, frequencies considered are in the low spectrum, as opposed to electrical systems, for which frequencies typically span the GHz range.

We recall the three different partitions of the frequencies $\{\omega_j\}_{j=1}^{j=420}$: 
\begin{itemize}
    \item {\sc Half\&Half}: $\bm \Lambda=\text{diag}([\ii\omega_1,-\ii\omega_1,\ldots,\ii\omega_{210},-\ii\omega_{210}])$,  
    $\bM=\text{diag}([\ii\omega_{211},-\ii\omega_{211},\ldots,\ii\omega_{420},-\ii\omega_{420}])$.
     \item {\sc Odd\&Even}: $\bm \Lambda=\text{diag}([\ii\omega_1,-\ii\omega_1,\ldots,\ii\omega_{419},-\ii\omega_{419}])$, 
     $\bM=\text{diag}([\ii\omega_{2},-\ii\omega_{2},\ldots,\ii\omega_{420},-\ii\omega_{420}])$.
     \item {\sc Odd\&Even \hspace{-1mm}(\hspace{-.5mm}Real\hspace{-.5mm})}: $\bm \Lambda_r\hspace{-.5mm}=\hspace{-.5mm}\text{diag}([-\omega_1^2,\hspace{-.5mm}-\omega_1^2,\ldots,\hspace{-.5mm}-\omega_{419}^2,\hspace{-.5mm}-\omega_{419}^2])$, 
     $\bM_r\hspace{-.5mm}=\hspace{-.5mm}\text{diag}([-\omega_{2}^2,\hspace{-.5mm}-\omega_{2}^2,\ldots,\hspace{-.5mm}-\omega_{420}^2,\hspace{-.5mm}-\omega_{420}^2])$.
\end{itemize}

\begin{table}[htbp!]
\arraycolsep=0.5pt\def\arraystretch{1.5}
 \centering
  \begin{tabular}{rrrr}
    & {\sc Half\&Half} & {\sc Odd\&Even} & {\sc Odd\&Even (Real)} \\
    \hline
    $\mathtt{hssrank}(\widetilde\C)$ & 32 & 30 & 13 \\
    $\mathtt{rank}(\C)$ & 36 & 420 & 210 \\
    $\|\widetilde\C-\C\|/\|\C\|$& 2.68e-12 & 2.61e-11 & 6.62e-13
    \\ 
      \end{tabular}
\caption{Example \ref{Ex.1}. HSS-rank and relative error of the HSS representation $\widetilde\C$ of $\C$ for different frequency partitions along with the rank of $\C$.}\label{tab1.ex1}
 \end{table}
For each partition, we compute the corresponding Cauchy matrix in HSS format $\widetilde\C$ without assembling the full $\C$ beforehand, by means of the function {\tt hss} of the {\tt hm-toolbox}:
$$\widetilde \C =\mathtt{hss('cauchy',dM,-dL,N,N)}$$
where $\mathtt{dM}$ and $\mathtt{dL}$ are $N$ dimensional vectors containing the frequencies $\mu_h$ and $\lambda_k$, respectively. We then calculate its rank by $\mathtt{hssrank}(\widetilde\C)$\footnote{Following Definition~\ref{Def:hss}, this function returns $\max\{\alpha,\beta\}$.}. In Table~\ref{tab1.ex1} we report the HSS-rank of the matrix $\widetilde\C$ for  the partitions mentioned above. Thanks to the small dimension of the dataset, we are able to compute the full Cauchy matrix $\C$ and document its (standard) rank along with the relative error $\|\widetilde\C-\C\|/\|\C\|$. As expected, having two disjoint sets of frequencies like in the {\sc Half\&Half} partition leads to a Cauchy matrix $\C$ whose (standard) rank is low. This does not happen in the other two scenarios we examine so that taking advantage of the HSS format is necessary to achieve memory-saving representations of $\C$. The results in Table~\ref{tab1.ex1} show that a good accuracy in terms of the relative error can be achieved for all three frequency partitions. Nevertheless, the HSS rank of $\widetilde\C$ is significantly lower for the {\sc Odd\&Even (Real)} partition, most likely due to the squaring of the frequencies performed in {\sc Odd\&Even (Real)}, which leads to a fast decay in the magnitude of the off-diagonal entries of $\C$. Hence, for a fixed threshold, the off-diagonal blocks of the Cauchy matrix associated to the {\sc Odd\&Even (Real)} partition
 can be approximated by matrices having a smaller rank than those associated to the other two scenarios we examined.

 In Figure~\ref{fig1.ex1} we display the absolute value -- on a logarithmic scale -- of the entries of the Cauchy matrix $\C$ stemming from the different partitions. The same scale has been used in all the three figures, enforcing the observation that the {\sc Odd\&Even (Real)} partition exhibits the fastest decay in the magnitude of the off-diagonal entries of $\C$ .
 
  \begin{figure}
\begin{minipage}[c]{.28\textwidth}
\centering
\caption*{{\sc Half\&Half}}
\includegraphics[scale=0.4]{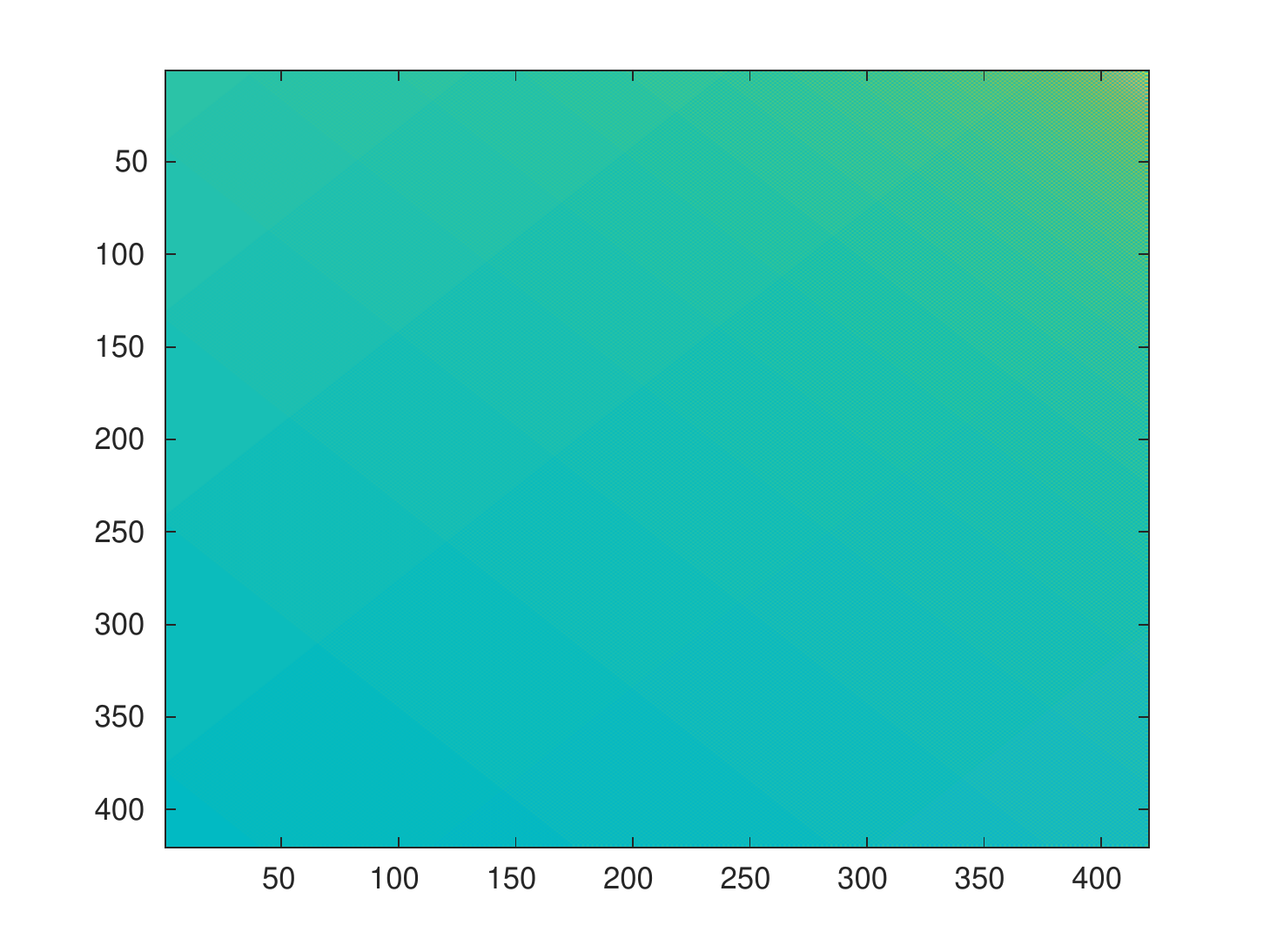}%
\end{minipage}%
\hspace{10mm}%
\begin{minipage}[c]{.28\textwidth}
\centering
\caption*{{\sc Odd\&Even}}
\includegraphics[scale=0.4]{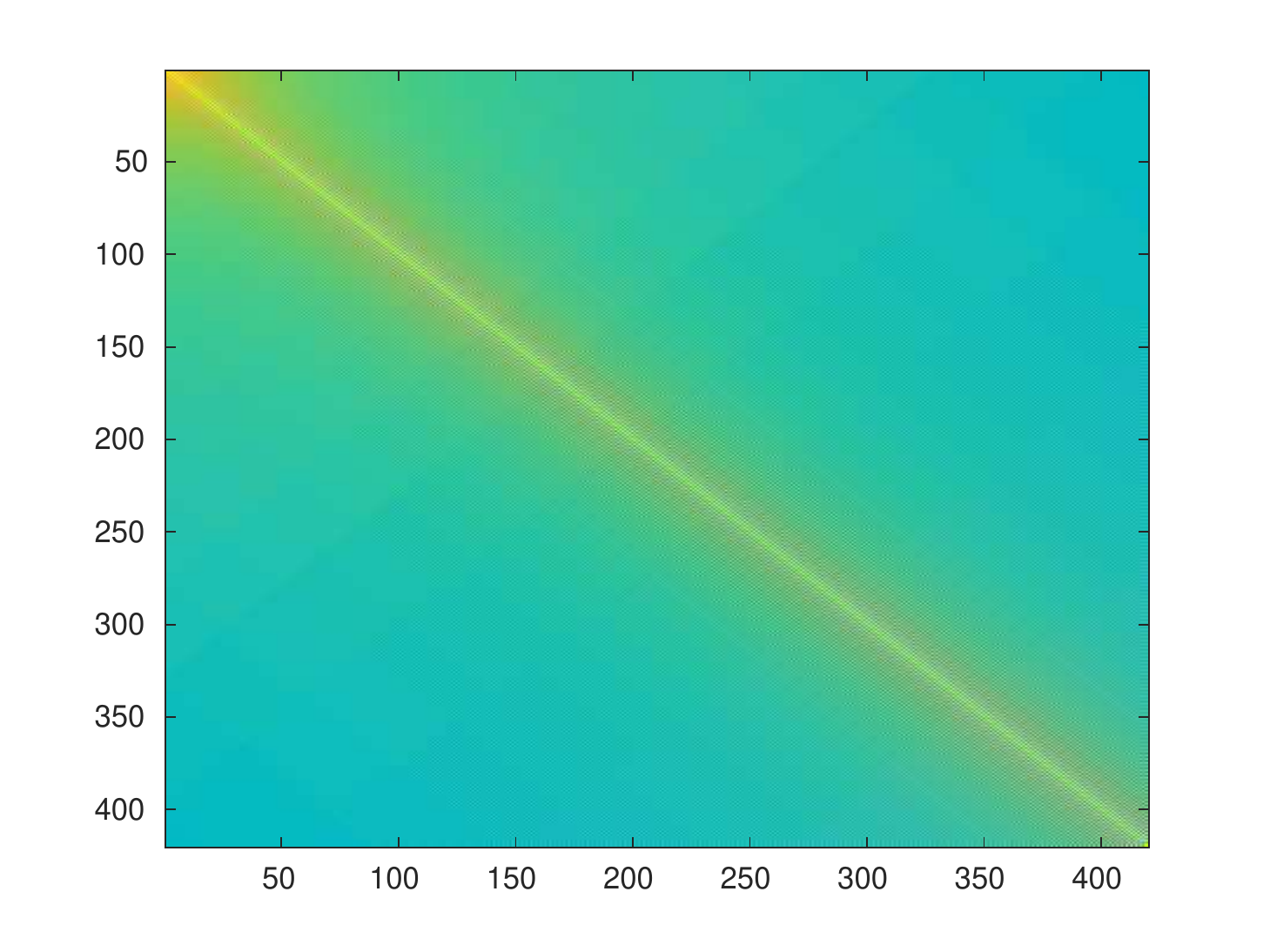}%
\end{minipage}
\hspace{10mm}%
\begin{minipage}[c]{.28\textwidth}
\centering
\caption*{{\sc Odd\&Even (real)}}

\includegraphics[scale=0.4]{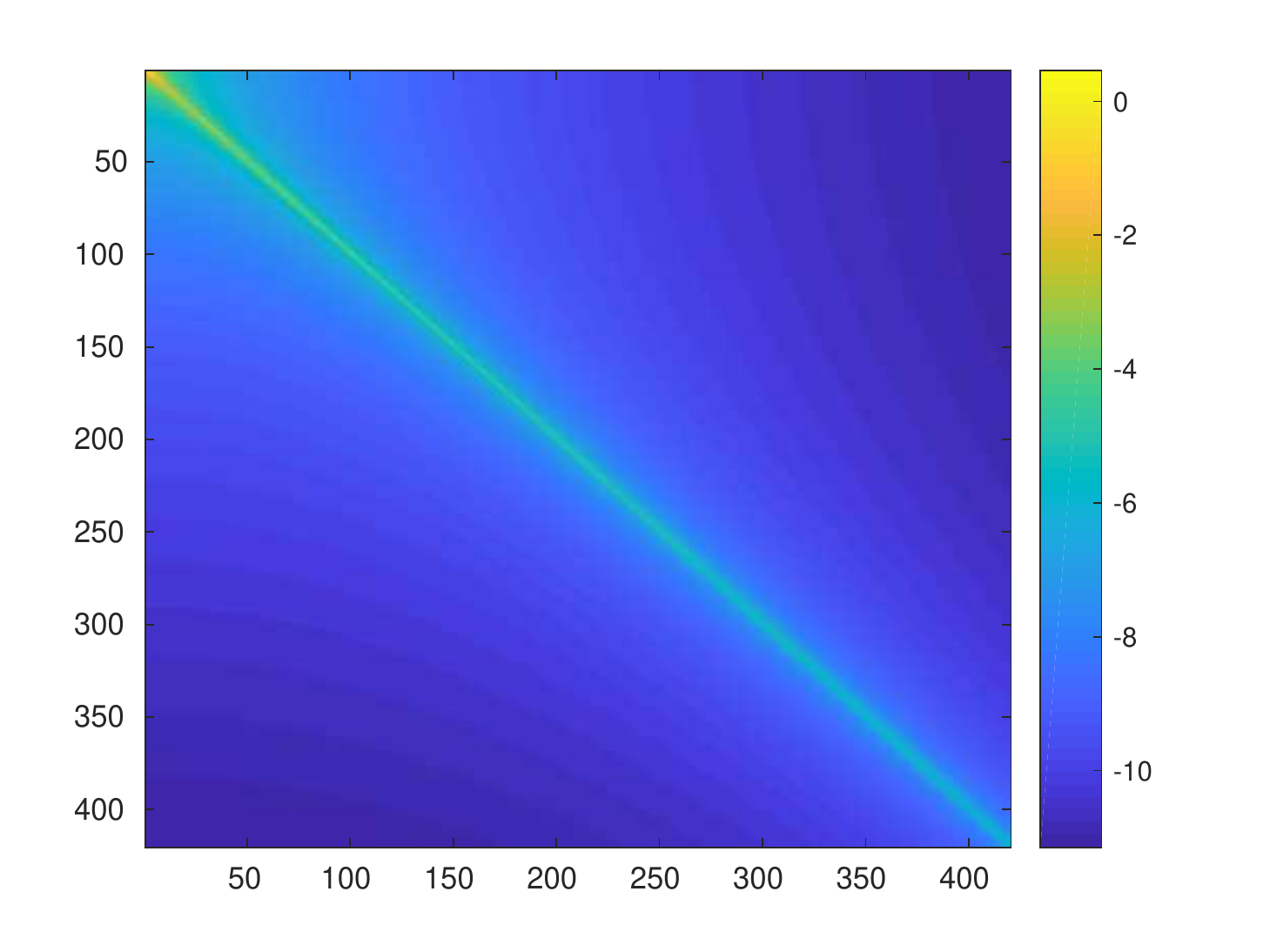}%
\end{minipage}
\caption{Example~\ref{Ex.1}. Absolute value -- on a logarithmic scale -- of the entries of the Cauchy matrix $\C$ stemming from the three different frequency partitions we have examined.}\label{fig1.ex1}
\end{figure}

}
\end{num_example}

\subsection{Efficient, inexact matrix-vector products}
Whenever the matrix $\C$ admits an accurate approximation in terms of a low-rank HSS matrix $\widetilde \C$, the computational cost of performing the matrix-vector product $(\sL-x\LL)\by$ can be significantly reduced.
\begin{Prop}\label{Prop:inexact_matrixvec}
Let $\widetilde \C$ be an $(\alpha,\beta)$-HSS matrix that approximates the Cauchy matrix $\C$ accurately. If $\LL$ and $\sL$ satisfy the Sylvester equations in \eqref{eq:Sylv}, then 
\begin{equation}\label{eq:P5eq1}
(\sL-x\LL)\by=\sum_{j=1}^q \widetilde \bv_j\circ(\widetilde\C(\widetilde \br_j^*\circ\widetilde \by))-\sum_{j=1}^p\widetilde \ell_j\circ(\widetilde\C(\widetilde \bw_j^*\circ\widetilde \by))+\bV \bR \by +  \mathcal{\bm E}\widetilde \by,
\end{equation}
where $\|\mathcal{\bm E}\|\leq(p+q)\|\C-\widetilde\C\|\max^2\{\|\bV\|_{\infty},\|\bR\|_{\infty},\|\bL\|_{\infty},\|\bW\|_{\infty}\}$.
Moreover, the computational cost of performing
\begin{equation}\label{eq:approx}
\sum_{j=1}^q \widetilde \bv_j\circ(\widetilde\C(\widetilde \br_j^*\circ\widetilde \by))-\sum_{j=1}^p\widetilde \ell_j\circ(\widetilde\C(\widetilde \bw_j^*\circ\widetilde \by))+\bV \bR \by,
\end{equation}
amounts to $\mathcal{O}((p+q)(\alpha+\beta+1)N)$ FLOPs.
\end{Prop}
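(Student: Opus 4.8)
The plan is to obtain the splitting~\eqref{eq:P5eq1} directly from the exact formula in Corollary~\ref{Cor_matrixvecprod} and then to estimate the perturbation produced by replacing the Cauchy matrix $\C$ with its HSS surrogate $\widetilde\C$. First I would recall that, by Corollary~\ref{Cor_matrixvecprod} with $\widetilde\by=(\bm\Lambda-x\bI)\by$,
\[
(\sL-x\LL)\by=\sum_{j=1}^q \widetilde\bv_j\circ\big(\C(\widetilde\br_j^*\circ\widetilde\by)\big)-\sum_{j=1}^p\widetilde\ell_j\circ\big(\C(\widetilde\bw_j^*\circ\widetilde\by)\big)+\bV\bR\by.
\]
Subtracting~\eqref{eq:approx} from this identity, only the central factor $\C$ changes into $\C-\widetilde\C$ in each summand; using $\bz\circ\bu=\mbox{diag}(\bz)\bu$ to turn the Hadamard products into diagonal scalings and pulling $\widetilde\by$ out on the right, the difference equals $\mathcal{\bm E}\widetilde\by$ with
\[
\mathcal{\bm E}=\sum_{j=1}^q \mbox{diag}(\widetilde\bv_j)(\C-\widetilde\C)\mbox{diag}(\widetilde\br_j^*)-\sum_{j=1}^p \mbox{diag}(\widetilde\ell_j)(\C-\widetilde\C)\mbox{diag}(\widetilde\bw_j^*).
\]
Since $\mathcal{\bm E}$ depends only on the data matrices and on $\C-\widetilde\C$, and not on $\by$, this is a genuine additive error term, which establishes~\eqref{eq:P5eq1}.

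To bound $\|\mathcal{\bm E}\|$ I would apply the triangle inequality and the submultiplicativity of the spectral norm, together with the elementary identity $\|\mbox{diag}(\bz)\|=\|\bz\|_\infty$. Each $\widetilde\bv_j$ is a column of $\bV$ and each $\widetilde\br_j$ a row of $\bR$, so $\|\mbox{diag}(\widetilde\bv_j)\|\le\|\bV\|_\infty$ and $\|\mbox{diag}(\widetilde\br_j^*)\|\le\|\bR\|_\infty$, and likewise $\|\mbox{diag}(\widetilde\ell_j)\|\le\|\bL\|_\infty$ and $\|\mbox{diag}(\widetilde\bw_j^*)\|\le\|\bW\|_\infty$. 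Hence each of the $q$ summands of the first sum is bounded by $\|\C-\widetilde\C\|\,\|\bV\|_\infty\|\bR\|_\infty$ and each of the $p$ summands of the second by $\|\C-\widetilde\C\|\,\|\bL\|_\infty\|\bW\|_\infty$; replacing every product of two of these factors by $\max^2\{\|\bV\|_\infty,\|\bR\|_\infty,\|\bL\|_\infty,\|\bW\|_\infty\}$ and summing gives exactly $\|\mathcal{\bm E}\|\le(p+q)\|\C-\widetilde\C\|\max^2\{\|\bV\|_\infty,\|\bR\|_\infty,\|\bL\|_\infty,\|\bW\|_\infty\}$.

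The FLOP count follows by adding up the costs of the operations in~\eqref{eq:approx}. Forming $\widetilde\by$ is a diagonal scaling, $\mathcal{O}(N)$. For each of the $p+q$ summands one performs a diagonal scaling ($\mathcal{O}(N)$), one matrix-vector product with the $(\alpha,\beta)$-HSS matrix $\widetilde\C$, which costs $\mathcal{O}((\alpha+\beta)N)$ by the HSS arithmetic recalled in Section~\ref{HSS representation of a Cauchy matrix}, and one Hadamard product ($\mathcal{O}(N)$), i.e.\ $\mathcal{O}((\alpha+\beta+1)N)$ per summand; adding the $p+q$ resulting vectors is $\mathcal{O}((p+q)N)$, and the low-rank contribution, computed as $\bV(\bR\by)$, costs $\mathcal{O}((p+q)N)$. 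Altogether this yields $\mathcal{O}((p+q)(\alpha+\beta+1)N)$ FLOPs.

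I do not expect a substantive obstacle: once Corollary~\ref{Cor_matrixvecprod} is in hand the argument is essentially bookkeeping. The only point deserving mild care is the matrix norm hidden in the notation $\|\cdot\|_\infty$ for $\bV,\bR,\bL,\bW$: any single row or column has $\infty$-norm bounded by the largest-modulus entry of the matrix, hence by $\|\cdot\|_\infty$ under both the max-entry and the induced (maximum absolute row sum) conventions, so the reduction to these four scalar quantities is valid irrespective of which convention is meant.
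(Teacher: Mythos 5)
Your proposal is correct and follows essentially the same route as the paper: both derive \eqref{eq:P5eq1} by splitting $\C$ as $\widetilde\C+(\C-\widetilde\C)$ in the exact formula of Corollary~\ref{Cor_matrixvecprod}, identify the identical expression for $\mathcal{\bm E}$, bound it via the triangle inequality and $\|\mbox{diag}(\bz)\|=\|\bz\|_\infty$, and count FLOPs using the $\mathcal{O}((\alpha+\beta)N)$ cost of an HSS matrix-vector product. Your extra remark on the norm convention for $\|\cdot\|_\infty$ is a sensible clarification but does not change the argument.
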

\begin{proof}
From the result in Corollary~\ref{Cor_matrixvecprod}, we can write
\begin{align*}
(\sL-x\LL)\by=&\sum_{j=1}^q \widetilde \bv_j\circ(\widetilde\C(\widetilde \br_j^*\circ\widetilde \by))-\sum_{j=1}^p\widetilde \ell_j\circ(\widetilde\C(\widetilde \bw_j^*\circ\widetilde \by))+\bV\bR \by \\
&+\sum_{j=1}^q \widetilde \bv_j\circ((\C-\widetilde\C)(\widetilde \br_j^*\circ\widetilde \by))-\sum_{j=1}^p\widetilde \ell_j\circ((\C-\widetilde\C)(\widetilde \bw_j^*\circ\widetilde \by))\\
=& \sum_{j=1}^q \widetilde \bv_j\circ(\widetilde\C(\widetilde \br_j^*\circ\widetilde \by))-\sum_{j=1}^p\widetilde \ell_j\circ(\widetilde\C(\widetilde \bw_j^*\circ\widetilde \by))+\bV\bR \by+\mathcal{\bm E}\widetilde \by,
\end{align*}
where $\mathcal{\bm E}:=\displaystyle\sum_{j=1}^q \text{diag}(\widetilde \bv_j)(\C-\widetilde\C)\text{diag}(\widetilde \br_j^*)-\sum_{j=1}^p\text{diag}(\widetilde \ell_j)(\C-\widetilde\C)\text{diag}(\widetilde \bw_j^*)$. Therefore, 
\begin{align*}
    \|\mathcal{\bm E}\|\leq&\|\C-\widetilde\C\|\left(\sum_{j=1}^q\|\text{diag}(\widetilde \bv_j)\|\|\text{diag}(\widetilde \br_j^*)\|+\sum_{j=1}^p\|\text{diag}(\widetilde \ell_j)\|\|\text{diag}(\widetilde \bw_j^*)\|\right)\\
    =& \|\C-\widetilde\C\|\left(\sum_{j=1}^q\|\widetilde \bv_j\|_{\infty}\|\widetilde \br_j\|_{\infty}+\sum_{j=1}^p\|\widetilde \ell_j\|_{\infty}\|\widetilde \bw_j\|_{\infty}\right)\\
    \leq & (p+q)\|\C-\widetilde\C\|\max{^2}\{\|\bV\|_{\infty},\|\bR\|_{\infty},\|\bL\|_{\infty},\|\bW\|_{\infty}\}.
\end{align*}
This proves the first part of Proposition \ref{Prop:inexact_matrixvec}. To conclude, by making use of the property that the matrix-vector product with a $(\alpha,\beta)$-HSS matrix costs $\mathcal{O}((\alpha+\beta)N)$ FLOPs and that $\bV\bR$ has rank $q$, a direct computation shows that the number of operations needed to perform \eqref{eq:approx} amounts to $\mathcal{O}((p+q)(\alpha+\beta+1)N)$ FLOPs, which proves the second claim in Proposition \ref{Prop:inexact_matrixvec}.

\end{proof}
As before, analogous results can be obtained for $\LL_r$ and $\sL_r$ satisfying \eqref{eq:Sylv_realLL}
and \eqref{eq:Sylv_realsL}, respectively.

Proposition~\ref{Prop:inexact_matrixvec} shows that, whenever $\|\C-\widetilde\C\|$ is small, the matrix-vector product $(\sL-x\LL)\by$ can be well-approximated by the expression in \eqref{eq:approx} while dramatically reducing the computational complexity from $\mathcal{O}(N^2)$ FLOPs to $\mathcal{O}((p+q)(\alpha+\beta+1)N)$ FLOPs. However, when this approximation is used within our favorite iterative procedure for computing a partial SVD of $\sL-x\LL$, the inexactness introduced by neglecting the term $\mathcal{\bm E}\widetilde \by$ should be taken into account. 

The use of inexact matrix-vector products within certain iterative procedures has been the subject of numerous research papers: Krylov techniques for solving linear systems and matrix equations \cite{Simoncini2003,Bouras2005,Eshof2004,Kuerschner2019,Kuerschner2018}, eigenvalue problems \cite{Freitag2007,Simoncini2002}, or an inexact variant of the Lanczos bidiagonalization for the computation of some leading singular triplets of a generic matrix function $f(\bA)$ can be found in \cite{GAAF2017}.
With the goal to decrease the computational cost of the overall procedure, these studies show that the accuracy of the matrix-vector product can be \emph{relaxed} (becoming more and more inaccurate) as iterations proceed. In our framework, the inexactness introduced by approximating
$(\sL-x\LL)\by$ with \eqref{eq:approx}
is fixed throughout the entire iterative procedure and mainly depends on $\|\C-\widetilde\C\|$, which is often small, as shown in Example~\ref{Ex.1}. Therefore, the approximation
$$(\sL-x\LL)\by\approx\sum_{i=1}^q \widetilde \bv_i\circ(\widetilde\C(\widetilde \br_i^*\circ\widetilde \by))-\sum_{i=1}^p \widetilde \ell_i\circ(\widetilde\C(\widetilde \bw_i^*\circ\widetilde \by))+\bV \bR \by,$$
does not greatly affect the accuracy of the computed singular triplets (see section~\ref{Numerical results}). Moreover, in our case, we do not need an accurate approximation of the singular triplets of $\sL-x\LL$.
The main goal is to have meaningful spaces spanned by the computed left and right singular vectors so that the obtained reduced model $(\bE,\bA,\bB,\bC)$ inherits the desired approximation properties. 
Moreover, as shown in \cite[Corollary~1.4]{IonitaPhD}, \cite[Proposition 8.25]{TutorialLoewner}, in the case of noise-free measurements of a low-order rational function, even general projectors, not necessarily obtained from the SVD, can be employed for identifying the underlying function. 

\begin{remark}
In Remark~\ref{remark:numericalrankC} we suggested to use the value $2(p+q)\pi_C$, where $\pi_C$ is the numerical rank of $\C$, to decide on the number $n$ of singular triplets of $\sL-x\LL$ needed for the reduced model. For interlaced partitions, as it is the case with  {\sc Odd\&Even} and {\sc Odd\&Even (real)} (see Table \ref{tab1.ex1}), the numerical (standard) rank of the Cauchy matrix is large, in general. Hence, the value 
$n=2(p+q)\max\{\alpha,\beta\}$ may instead be employed for the computation of a meaningful reduced model whenever $\C$ can be well-approximated by a $(\alpha,\beta)$-HSS matrix $\widetilde\C$\footnote{As before, the value $4(p+q)\max\{\alpha,\beta\}$ should be preferred whenever $\LL_r$ and $\sL_r$ solve \eqref{eq:Sylv_realLL} and \eqref{eq:Sylv_realsL}, respectively.}. Moreover, the HSS-rank of 
$\widetilde\C$ is obtained as a byproduct of the construction of $\widetilde\C$.
\end{remark}

\begin{remark} \label{remark:newn}
If $\C$ admits an accurate approximation in terms of an $(\alpha,\beta)$-HSS matrix $\widetilde\C$, the expression in Theorem~\ref{Th:expression_LLandsL} shows that $\LL$ can also be well-approximated by a HSS matrix $\widetilde\LL$ whose rank is at most $((p+q)\alpha,(p+q)\beta)$. Even though the computational cost of $\widetilde\LL \by$ would still be $\mathcal{O}((p+q)(\alpha+\beta)N)$ FLOPs, using the HSS approximation $\widetilde\LL$ of $\LL$ may be very advantageous whenever linear systems with $\LL$ need to be solved (see, e.g., the procedure presented in \cite{Embree2019} for the pseudospectra computation of $\sL-x\LL$). Indeed, as mentioned in section~\ref{HSS representation of a Cauchy matrix}, the computation of the inverse $\widetilde\LL^{-1}$ of $\widetilde\LL$ costs $\mathcal{O}((p+q)^3(\alpha+\beta)^3N)$ FLOPs. Once $\widetilde\LL^{-1}$ is computed, we need only $\mathcal{O}((p+q)(\alpha+\beta)N)$ FLOPs to perform $\widetilde\LL^{-1}\by$.
\end{remark}


\section{Numerical results}\label{Numerical results}

In this section we present numerical experiments illustrating the potential of the proposed approach. 

In Example~\ref{Ex.2}, we compare our approach to standard procedures employed in the Loewner framework. Recall that the main steps in the standard approach involve forming the full Loewner and shifted Loewner matrices $\LL$ and $\sL$ and computing the SVD of $\sL-x\LL$. This SVD can be either computed in full, followed by keeping only the $n$ dominant singular vectors, or only these $n$ singular vectors can be obtained by means of an iterative procedure, where the matrix-vector product with $\sL-x\LL$ is needed\footnote{We employ the \MATLAB functions {\tt svd} and {\tt svds}, respectively.}. In the following, we report the overall running time, considering the \emph{construction} step ({\sc Construction}), i.e., the computation of $\LL$ and $\sL$ in the standard approach and of $\widetilde \C$ in our approach, as well as the \emph{reduction} step ({\sc Reduction}), involving the SVD computation followed by projection to obtain the reduced matrices in~\eqref{eq:E_D0}. In terms of memory requirements, for our approach, this involves the allocation of 
$\widetilde \C$ in the HSS format, while for the standard approach, we report the storage required for $\LL$ and $\sL$. 

In Table~\ref{tab.cost} we recall the computational cost of the construction and reduction steps of both the standard approach, based on either a full or a partial SVD, and the novel one presented in this paper along with their memory requirements.

\begin{table}[h]
\arraycolsep=0.5pt\def\arraystretch{1.5}
 \centering
  \begin{tabular}{r|rrr}
   &  {\sc Construction} & {\sc Reduction} & {\sc Storage}\\
 \hline
  Full {\tt svd} & $\mathcal{O}(N^2(p+q))$ & $\mathcal{O}(N^3)$ &  $\mathcal{O}(N^2)$ \\
  {\tt svds} w/ $\sL-x\LL$ & $\mathcal{O}(N^2(p+q))$ & $\mathcal{O}(nN^2)$ & $\mathcal{O}(N^2)$ \\
  {\tt svds} w/ $\widetilde\C$ & $\mathcal{O}(\max\{\alpha,\beta\}N\log N)$& 
 $\mathcal{O}(n(p+q)(\alpha+\beta+1)N)$ & $\mathcal{O}((\alpha+\beta+p+q)N)$\\
      \end{tabular}
\caption{Computational cost of the construction ({\sc Construction}) and reduction ({\sc Reduction}) steps of the different approaches we test along with their storage demand ({\sc Storage}). The computational cost of the construction of $\widetilde\C$ can be found, e.g., in \cite[Table 1]{hmtoolbox}.}\label{tab.cost}
 \end{table}
 
Lastly, the accuracy of the reduced models is reported in terms of the normalized $\mathcal{H}_2$-error: 
$$\mathcal{H}_2-\text{error}=\sqrt{\frac{\sum_{j=1}^N\|\bH_j-\bH(i\omega_j)\|_F^2}{\sum_{j=1}^N\| \bH_j\|_F^2}},$$
where $\|\cdot\|_F$ denotes the Frobenius norm. Similar results in terms of accuracy are attained for the $\mathcal{H}_\infty$-error, however, we decided not to document them here, for the sake of brevity. 

In Example~\ref{Ex.3}, we compare our novel strategy to the one presented in \cite{FSVDL}, which makes use of the low-rank ADI-Galerkin method for computing the Loewner matrix as the solution to~\eqref{eq:Sylv}. Such a scheme computes low-rank approximations to the dense Loewner matrix to speed-up the SVD computation, however, the memory constraints originating from the allocation of $\LL$ and $\sL$ are still present.

Results were obtained by running \MATLAB R2020b \cite{MATLAB2020b} on a 
MacBook Pro with an Intel Core i9 processor running at 2.3GHz using 16GB of RAM.
All computations involving HSS matrices employed the {\tt hm-toolbox} \cite{hmtoolbox} with the default settings and the threshold for off-diagonal truncation set to $10^{-14}$.

\begin{num_example}\label{Ex.2}
{\rm
We consider a synthetic problem for which we can control the order of the original system~($n$), the number of inputs and outputs ($p=q$), as well as the number of measurements ($N$). The system dynamics is generated randomly, with poles in complex conjugate pairs. In particular:
\begin{itemize}
\item the real part of the poles is random with mean $-10^4$ and standard deviation $-2\cdot 10^3$;  
the imaginary part is also random, with mean $10^4$ and standard deviation $10^6$.
\item residues associated to each pole are rank-1 matrices, obtained as outer products between two random vectors, both having the real part with mean $0$ and standard deviation $10$, while the imaginary part has mean $0$ and standard deviation $10^2$.
\end{itemize}
Measurement points $\{\omega_j\}_{j=1}^{j=N}$ are logarithmically distributed between $10^4$ and $10^7$ rad/sec. Last, but not least, random noise with a signal-to-noise ratio $SNR=100$ was added to the transfer function evaluation $\bH(\ii \omega_j)$ to obtain the measurement matrices $\bH_j$.
We adopt the {\sc Odd\&Even (real)} partition of the frequencies as it achieves satisfactory approximation results while eliminating complex arithmetic.

We compare the proposed approach to the traditional Loewner framework, in which the Loewner and shifted Loewner matrices $\LL$ and $\sL$ are formed and the full SVD of $\sL-x\LL$ is computed, as well as the alternative approach in which, after building $\LL$ and $\sL$, a partial SVD of $\sL-x\LL$ using the Matlab {\tt svds} function is computed for various instances of the data set described above for different values of $N$, $p$, and $n$. The command {\tt svds} was employed with the left starting vector $\widetilde \bv_1$ (same notation as in Theorem~\ref{Th:expression_LLandsL}) instead of a random starting vector, which is the default setting. 

Figure~\ref{fig1.ex2.laptop} presents the memory requirements for storing the Loewner and shifted Loewner matrices $\LL$ and $\sL$ (in red), as opposed to storing the HSS approximation $\widetilde \C$ in our approach (in blue), along with the storage needed to allocate the data in $\bm\Lambda_r$, $\bm M_r$, $\bR_r$, $\bL_r$, $\bV_r$, $\bW_r$, for increasing values in the number of inputs and outputs $p$ (in black). We point out that for values of $N$ larger than $40\,000$, we were not able to allocate the full matrices $\LL$ and $\sL$ on the employed laptop (this value, however, depends on the available RAM memory of the machine). For instances when these matrices can be allocated, Figure~\ref{fig1.ex2.laptop} shows that the memory requirements for the proposed approach are always much lower than for the standard scheme. Moreover, in contrast to what happens to the memory required for the data matrices, the storage demanded by the allocation of $\widetilde \C$ in HSS format is independent of $p$.

 \begin{figure}[t]
\centering
\includegraphics[scale=0.42]{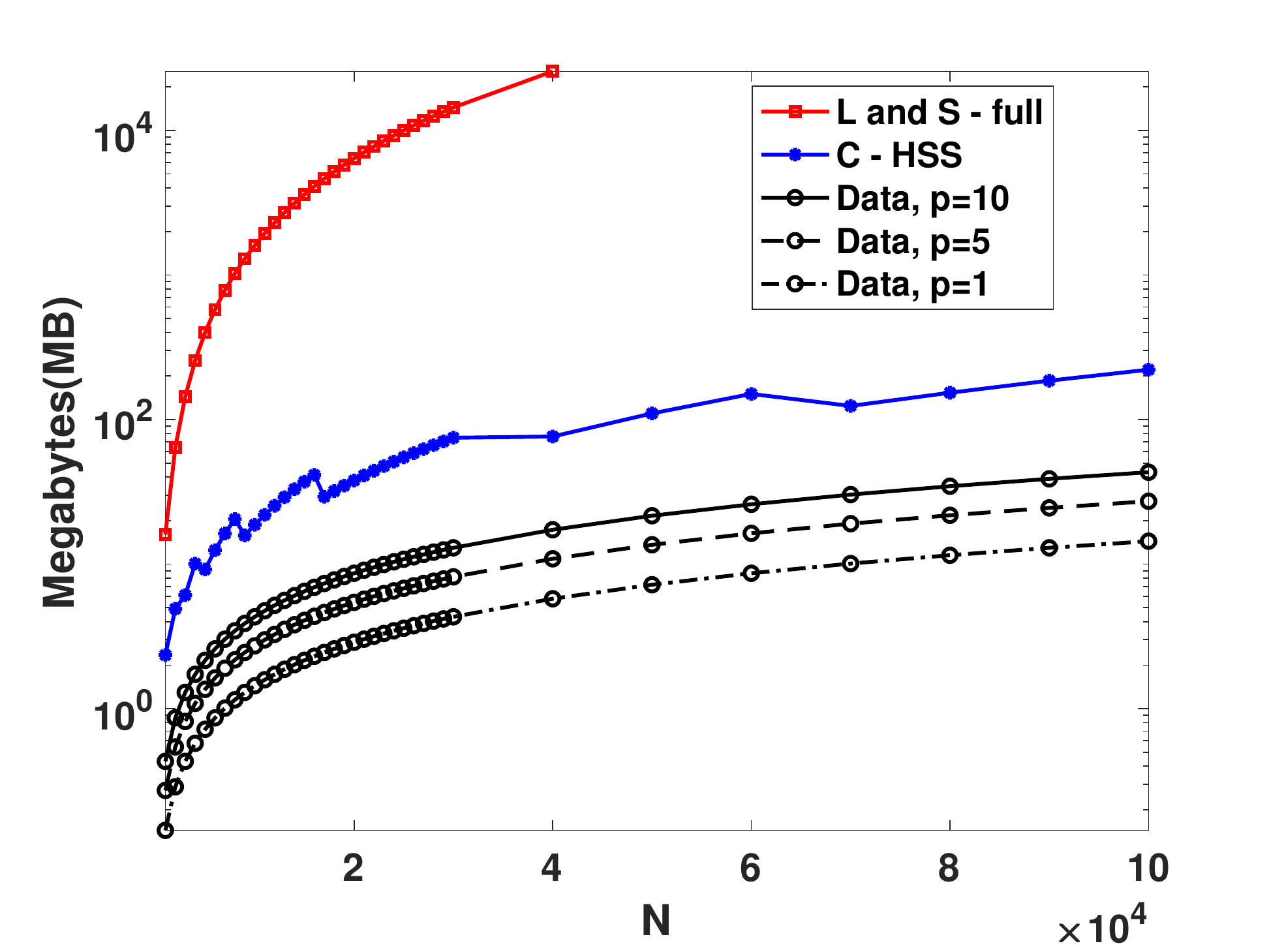}
\caption{Example~\ref{Ex.2}. Memory requirements in Megabytes to store $\LL$, $\sL$, $\widetilde\C$, and the data matrices ($\Lambda_r$, $M_r$, $\bV_r$, $\bW_r$, $\bL_r$, $\bR_r$) for different values of $N$ and $p$. }
\label{fig1.ex2.laptop}
\end{figure}

We report the results of the comparison between the different approaches in terms of run time in Table~\ref{tab1.ex2.laptop} for the number of measurements $N$ varying between $1\, 000$ to $100\, 000$, the number of inputs and outputs $p$ taking values $1$, $5$ and $10$, and the number of poles being $50$ or $100$. The ``--'' is used to indicate the instances for which we were not able to compute the reduced model~\eqref{eq:E_D0}: for $N>50\, 000$, we cannot allocate the full matrices $\LL$ and $\sL$, and for $N=30\, 000,40\, 000$ we could not compute the full SVD of $\sL-x\LL$ . Such constraints are not relevant to our proposed strategy. It is pertinent to remark the following:
\begin{enumerate}
    \item the CPU time of the full SVD approach does not depend on $p$ and $n$, only on $N$, as expected from Table \ref{tab.cost}: indeed, the cost of building $\LL$ and $\sL$ is quadratic in $N$ whereas the full SVD demands $\mathcal{O}(N^3)$ FLOPs; the full SVD approach is rarely the fastest method (it can happen for very modest values of $N$ in the considered range);
    \item the CPU time of the full assembly of $\sL-x\LL$ followed by the {\tt svds} Matlab command does not depend on $p$, only on $N$ and $n$, as expected from Table \ref{tab.cost}: the construction of $\LL$ and $\sL$ costs $\mathcal{O}(N^2)$ FLOPs, whereas the computational effort for the partial SVD depends on $n$, leading to a more demanding procedure for large $n$; it is usually the fastest approach for (very) modest values of $N$ in the considered range and $p>1$;
    \item the HSS rank of the Cauchy matrix approximation $\widetilde\C$ only depends on the frequency samples, hence on $N$ because, in our scenario, the sampling interval is the same, but the distribution of points inside the interval is different for each $N$; there may be instances when, for the same samples, the HSS rank of $\widetilde\C$ may produce slightly different results due to the randomness induced by the adaptive cross approximation procedure used in constructing $\widetilde\C$ (for instance, for $N = 50\, 000$, $p = 5$, $n = 50$ and $n = 100$, the rank is $28$, while for the rest of the values considered for $n$ and $p$, the rank is $27$); moreover, the HSS rank increases with $N$;
    \item our proposed approach is as accurate as the first two approaches, highlighting the fact that the HSS approximation $\widetilde\C$ does not lead to significant losses in the approximation properties of the reduced model~\eqref{eq:E_D0}; clearly, our approach cannot be more accurate than the traditional Loewner framework, especially when the full SVD is performed;
    \item last, but not least, the CPU time of the proposed solution depends linearly on $p$, $n$ and as $N \log N$ (Table \ref{tab.cost}), thus being the fastest method for large values of $N$; moreover, no memory constraints are present for $N$ up to $100\, 000$.
\end{enumerate}

\begin{table}[htbp!]
 \centering
  \begin{tabular}{rrr|rr|rr|rrr}
     &  &  & \multicolumn{2}{c}{Full {\tt svd}} & \multicolumn{2}{|c|}{{\tt svds} w/ $\sL-x\LL$} &\multicolumn{3}{c}{{\tt svds} w/ $\widetilde\C$} \\
$N$ & $p$ & $n$ & Time (s) & $\mathcal{H}_2$-error & Time (s) & $\mathcal{H}_2$-error & $\mathtt{hssrank}(\widetilde\C)$ & Time (s) & $\mathcal{H}_2$-error\\
    \hline
    1 000 & \multirow{10}{*}{1} & \multirow{10}{*}{50} &  
    0.28 & 3.62e-10 & \textbf{0.20} & 3.62e-10 & 15 & 1.07 & 3.62e-10 \\
    3 000 & & & 10.69 & 3.71e-10 & 3.51 & 3.71e-10 & 19 & \textbf{3.16} & 3.71e-10 \\
    5 000 & & & 20.79 & 3.7e-10 & 12.29 & 3.7e-10 & 21 & \textbf{6.99} & 3.7e-10\\
    10 000 & & & 158.41 & 3.71e-10 & 68.40 & 3.71e-10 & 22 & \textbf{15.55} & 3.71e-10 \\
    15 000 &  &  &  554.98 & 3.73e-10 & 209.31 & 3.73e-10 & 24 & \textbf{22.86} & 3.73e-10 \\
    29 000 & & & 4674.35 & 3.74e-10 & 1590.37 & 3.74e-10 & 26 & \textbf{52.33} & 3.74e-10 \\
    30 000 & & & -- & -- & 1827.45 & 3.74e-10 & 26 & \textbf{50.68} & 3.74e-10 \\
    40 000 & & & -- & -- & 11214.41 & 3.74e-10 & 27 & \textbf{71.44} & 3.74e-10 \\
    50 000 & & & -- & -- & -- & -- & 27 & \textbf{91.88} & 3.74e-10 \\
    100 000 & & & -- & -- & -- & -- & 30 & \textbf{189.71} & 3.75e-10 \\
    \hline
    1 000 & \multirow{6}{*}{1} & \multirow{6}{*}{100} & \textbf{0.21} & 9.63e-11 & 0.22 & 9.63e-11 & 15 & 1.59 & 9.64e-11 \\
    3 000 & & & 10.66 & 1.01e-10 & 5.84 & 1.01e-10 & 19 & \textbf{5.65} & 1.01e-10 \\
    5 000 & & & 20.62 & 1.01e-10 & 18.89 & 1.01e-10 & 21 & \textbf{12.77} & 1.01e-10 \\
    10 000 & & & 156.76 & 1.01e-10 & 93.16 & 1.01e-10 & 22 & \textbf{28.47} & 1.02e-10\\
    50 000 & & & -- & -- & -- & -- & 27 & \textbf{155.46} & 1.03e-10\\
    100 000 & & & -- & -- & -- & -- & 30 & \textbf{321.84} & 1.03e-10\\
    \hline 
    1 000 & \multirow{6}{*}{5} & \multirow{6}{*}{50} & 0.27 & 3.71e-10 & \textbf{0.19} & 3.71e-10 & 15 & 1.20 & 3.72e-10 \\
    3 000 & & & 10.69 & 3.17e-10 & \textbf{3.56} & 3.17e-10 & 19 & 4.59 & 3.17e-10 \\
    5 000 & & & 20.81 & 3.01e-10 & 12.29 & 3.01e-10 & 21 & \textbf{9.51} & 3.02e-10 \\
    10 000 & & & 157.75 & 2.92e-10 & 68.43 & 2.92e-10 & 22 & \textbf{19.67} & 2.92e-10\\
    50 000 & & & -- & -- & -- & -- & 28 & \textbf{107.74} & 2.88e-10\\
    100 000 & & & -- & -- & -- & -- & 30 & \textbf{230.17} & 2.88e-10\\
    \hline
    1 000 & \multirow{6}{*}{5} & \multirow{6}{*}{100} & 0.26 & 2.52e-10 & \textbf{0.25} & 2.52e-10 & 15 & 2.21 & 2.52e-10\\
    3 000 & & & 10.69 & 1.41e-10 & \textbf{5.90} & 1.41e-10 & 19 & 8.62 & 1.41e-10 \\
    5 000 & & & 20.78 & 1.35e-10 & 18.97 & 1.35e-10 & 21 & \textbf{17.73} & 1.35e-10 \\
    10 000 & & & 157.75 & 1.33e-10 & 93.58 & 1.33e-10 & 22 & \textbf{36.84} & 1.33e-10 \\
    50 000 & & & -- & -- & -- & -- & 28 & \textbf{197.72} & 1.31e-10 \\
    100 000 & & & -- & -- & -- & -- & 30 & \textbf{421.02} & 1.3e-10 \\
    \hline 
    1 000 & \multirow{6}{*}{10} & \multirow{6}{*}{50} &0.26 & 6.57e-10 & \textbf{0.18} & 6.57e-10 & 15 & 1.61 & 6.57e-10 \\
    3 000 & & & 11.05 & 3.2e-10 & \textbf{3.67} & 3.2e-10 & 19 & 5.58 & 3.2e-10 \\
    5 000 & & & 20.83 & 2.73e-10 & 12.25 & 2.73e-10 & 21 & \textbf{10.94} & 2.73e-10\\
    10 000 & & & 159.13 & 2.68e-10 & 69.34 & 2.68e-10 & 22 & \textbf{23.09} & 2.68e-10\\
    50 000 & & & -- & -- & -- & -- & 27 & \textbf{132.64} & 2.56e-10\\
    100 000 & & & -- & -- & -- & -- & 30 & \textbf{293.55} & 2.54e-10\\
    \hline
    1 000 & \multirow{6}{*}{10} & \multirow{6}{*}{100} & \textbf{0.24} & 5.27e-10 & 0.24 & 5.27e-10 & 15 & 2.88 & 5.25e-10 \\
    3 000 & & & 10.68 & 1.78e-10 & \textbf{5.94} & 1.78e-10 & 19 & 10.45 & 1.78e-10 \\
    5 000 & & & 20.84 & 1.73e-10 & \textbf{18.97} & 1.73e-10 & 21 & 20.50 & 1.73e-10 \\
    10 000 & & & 157.48 & 1.65e-10 & 93.63 & 1.65e-10 & 22 & \textbf{42.97} & 1.65e-10 \\
    50 000 & & & -- & -- & -- & -- & 27 & \textbf{248.91} & 1.58e-10 \\
    100 000 & & & -- & -- & -- & -- & 30 & \textbf{552.66} & 1.58e-10 \\
    \end{tabular}
\caption{Example \ref{Ex.2}. Computational time (in seconds) and $\mathcal{H}_2$-error achieved by each approach for different values of $N$ (number of samples), $p$ (number of inputs and outputs), and $n$ (order of the underlying system and of the model) on the employed laptop.}\label{tab1.ex2.laptop}
 \end{table}

  \begin{figure}[t]
  \centering
\begin{minipage}[c]{.45\textwidth}
\centering
\includegraphics[scale=0.5]{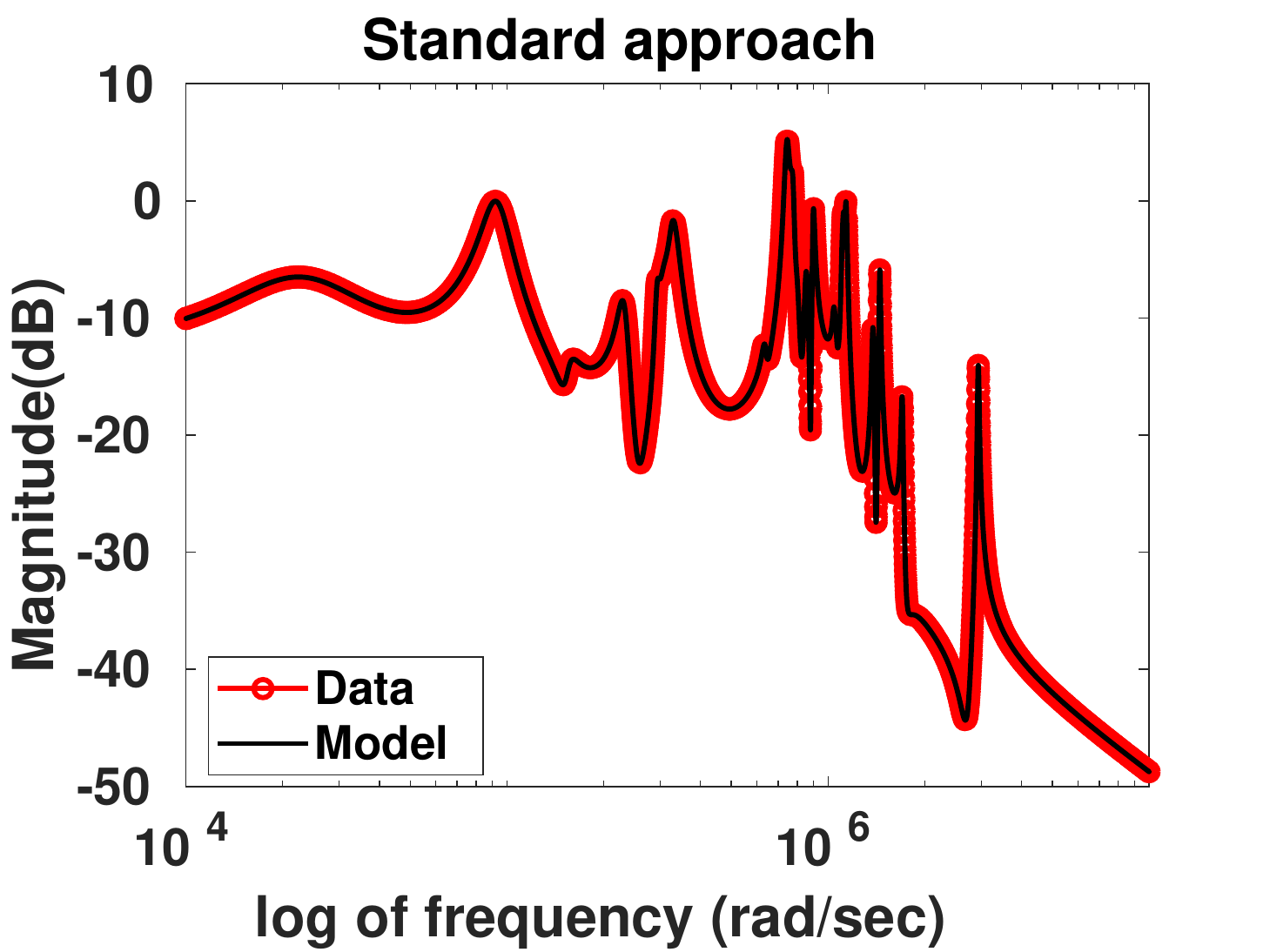}%
\end{minipage}
\hspace{6mm}%
\begin{minipage}[c]{.45\textwidth}
\centering
\includegraphics[scale=0.5]{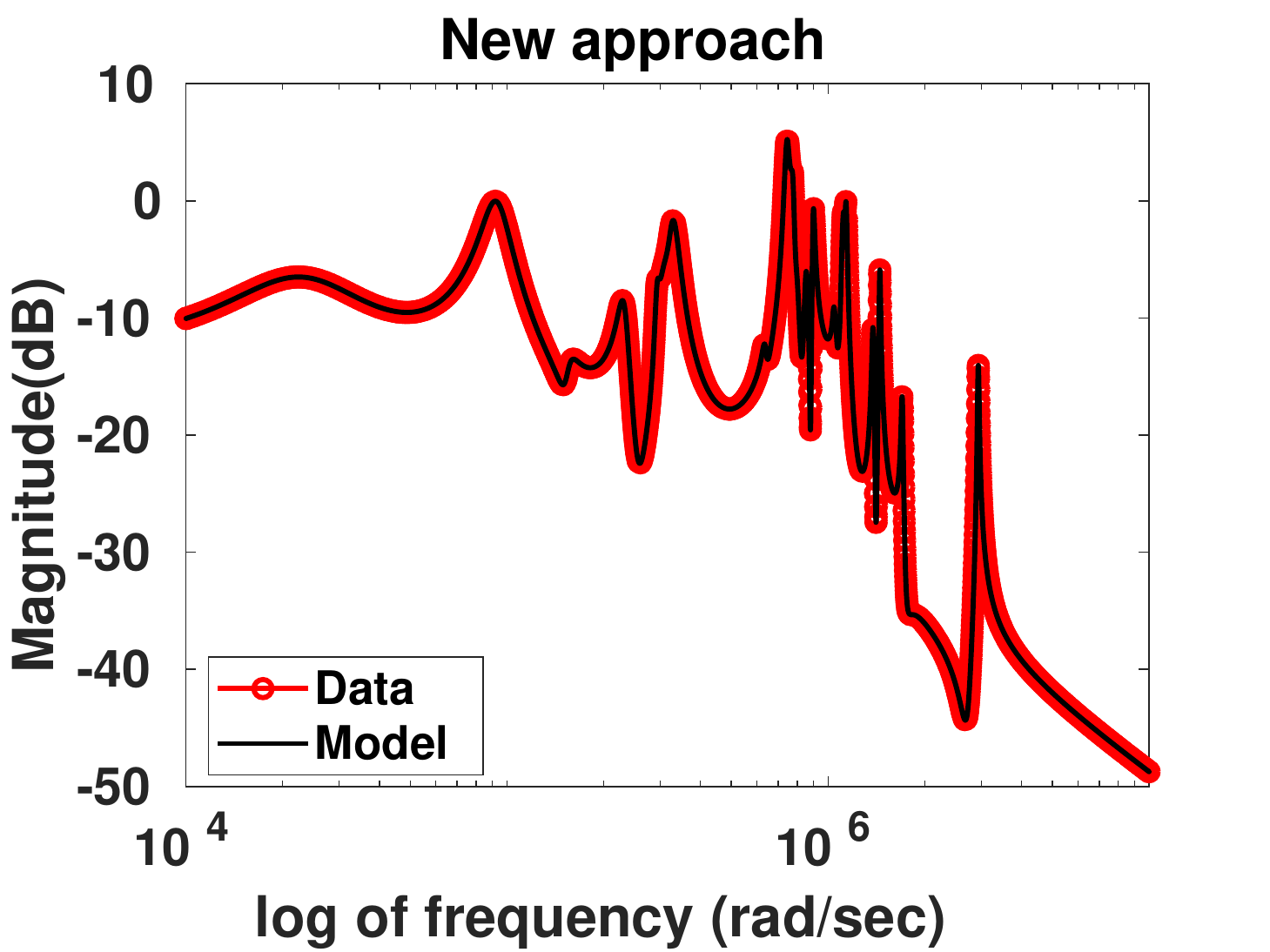}%
\end{minipage}
\caption{Example~\ref{Ex.2}. Left: Frequency response obtained with the standard approach (in black) and the measurements (in red) for $p=1$, $n=50$, and $N=10\,000$. Right: Frequency response obtained with the proposed approach (in black) and the measurements (in red) for $p=1$, $n=50$, and $N=10\,000$.}\label{fig3.ex2}
\end{figure}
 
 

In Figure~\ref{fig2.ex2.laptop} (left) we plot the computational time of the three approaches for $p=1$, $n=50$, and different values of $N$. Even though these are the same results as those reported in Table~\ref{tab1.ex2.laptop}, Figure~\ref{fig2.ex2.laptop} (left) clearly shows the $\mathcal{O}(N^3)$ trend of the full SVD scheme versus the $\mathcal{O}(N^2)$ trend of the {\tt svds} scheme versus the $\mathcal{O}(N)$ behaviour of the proposed approach. In Figure~\ref{fig2.ex2.laptop} (right) we depict, on a logarithmic scale, the running time of the proposed procedure for $n=50$ and different values of $N$ and $p$, clearly exhibiting a linear dependency on $p$ and an $N \log N$ dependency with respect to $N$.

  \begin{figure}[t]
  \centering
\begin{minipage}[c]{.45\textwidth}
\centering
\includegraphics[scale=0.4]{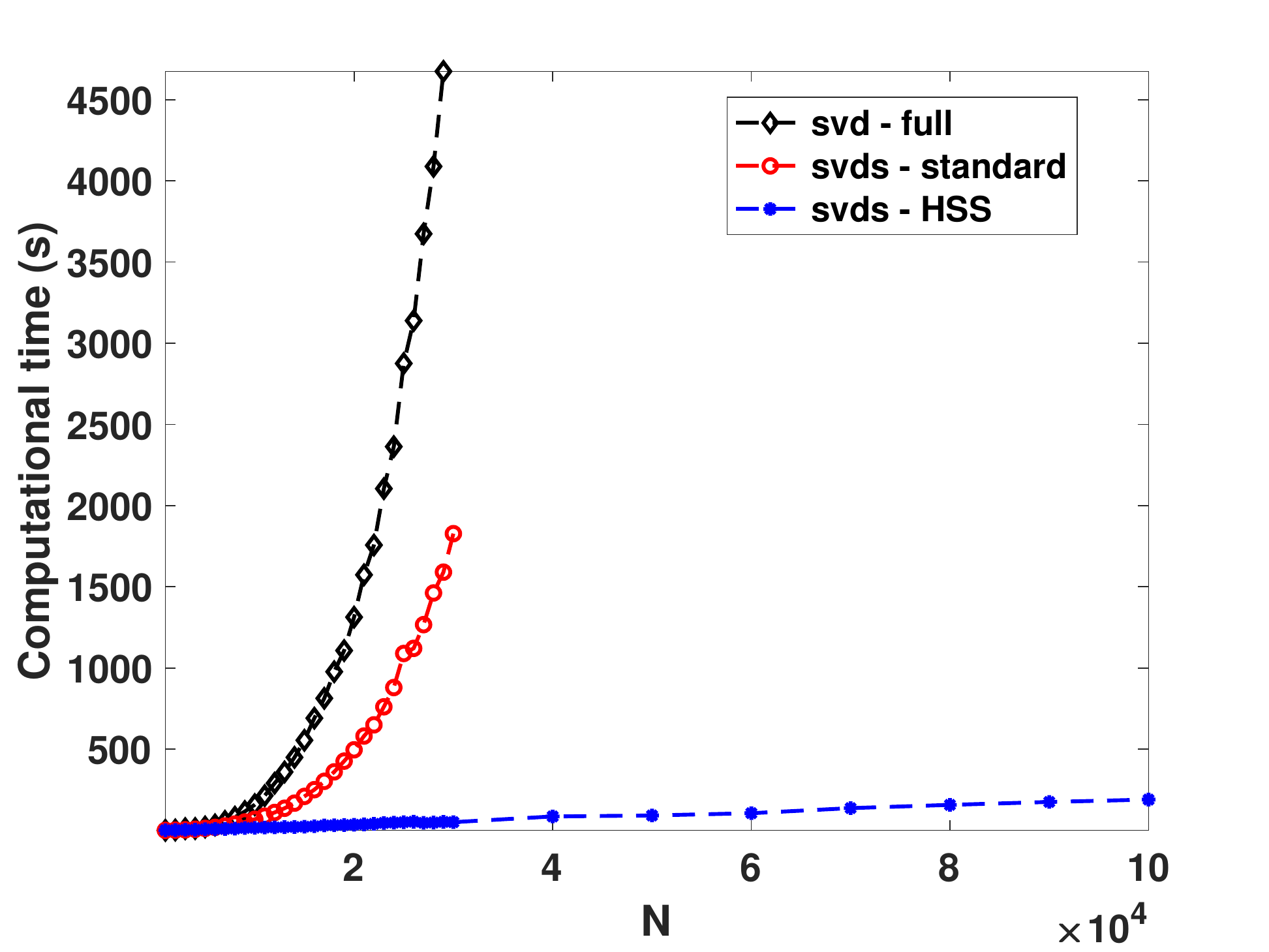}%
\end{minipage}
\hspace{8mm}%
\begin{minipage}[c]{.45\textwidth}
\centering
\includegraphics[scale=0.4]{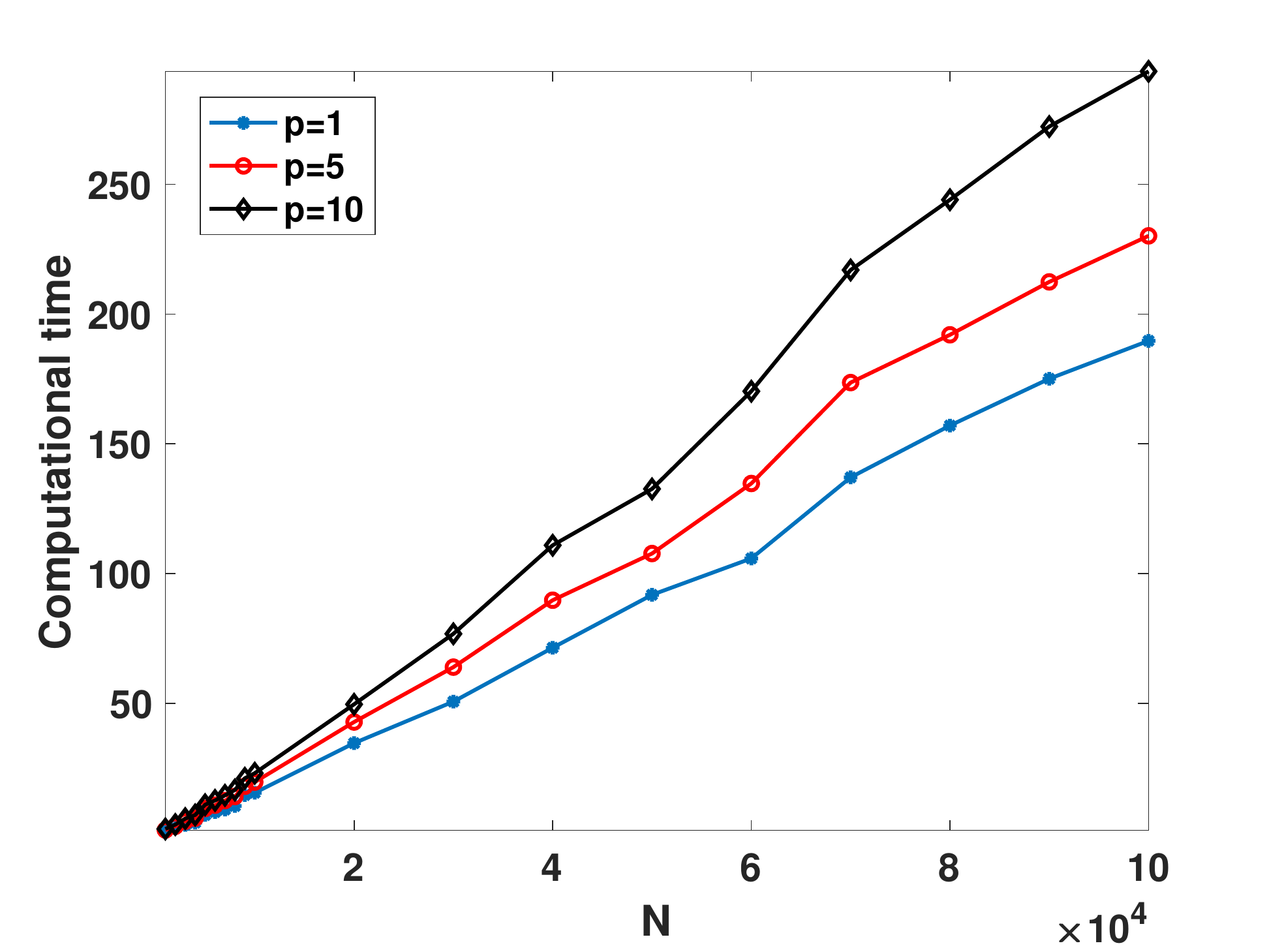}%
\end{minipage}
\caption{Example~\ref{Ex.2}. Left: Computational time achieved by the different approaches for $p=1$, $n=50$, and $N$. Right: Computational time achieved by our novel procedure for $n=50$, and different values of $N$ and $p$.}\label{fig2.ex2.laptop}
\end{figure}

 }
\end{num_example}
\begin{num_example}\label{Ex.3}
{\rm
In this example we compare the novel strategy presented in this paper to the fast Loewner SVD scheme illustrated in~\cite{FSVDL}. 
We consider the same data set as the one in Example~\ref{Ex.2}, this time with $SNR=120$ and a random $\bD\in \mathbb{R}^{p \times p}\neq \bfz$. Due to the fact that the models resulting from the Loewner framework have $\bD=\bfz$, a realization of size $n+p$ is needed to approximate the system with $\bD\neq \bfz$ \cite{artAJMACA, SLACATCAD09}. 

In \cite{FSVDL}, a Galerkin-ADI method is applied to the Sylvester equation~\eqref{eq:Sylv} satisfied by the Loewner matrix. At the $k$-th iteration, a low-rank approximation $P_k L_k Q_k^*$, $P_k,Q_k\in\mathbb{C}^{N\times\bar k}$, $L_k\in\mathbb{C}^{\bar k\times \bar k}$, to $\LL$ is thus computed. If $U_kS_kV_k^*=L_k$ denotes the SVD of $L_k$, then the matrices $P_kU_k$ and $V_k^*Q_k^*$ can be used in place of $\bX_n$ and $\bY_n$ in~\eqref{eq:E_D0} to compute the reduced model. The method is stopped whenever the norm of the residual matrix $\bm M P_k L_k Q_k^*-P_k L_k Q_k^* \bm \Lambda -\bV\bR+\bL\bW$, consisting of the left-hand side of the Sylvester equation with $\LL$ replaced by its low-rank approximation $P_k L_k Q_k^*$, is smaller than a certain threshold $\varepsilon$. In the results that follow we employ $\varepsilon=10^{-4}$, as done in~~\cite{FSVDL}. At each iteration step, the SVD of $L_k$ is truncated to keep only the $n+p$ significant values. 

We consider the {\sc Half\&Half} partition of the frequencies as this is the best scenario for the scheme coming from~\cite{FSVDL}. The {\sc Half\&Half} partition often leads to a rather fast convergence of the Galerkin-ADI method in terms of number of iterations so that a quite small approximation space is constructed. 
If different partitions were used, the Galerkin-ADI method could be equipped with a quite involved divide-and-conquer scheme; see~\cite{FSVDL}. 
On the other hand, as illustrated in Example~\ref{Ex.1}, the {\sc Half\&Half} partition leads to higher values of the HSS-rank of $\widetilde C$ than for the {\sc Even\&Odd} partition with a consequent increment in the computational efforts of our scheme. In addition, as for~\cite{FSVDL}, our tests employed complex arithmetic and did not solve the corresponding Sylvester equation \eqref{eq:Sylv_realLL} for real-coefficient matrices. 

In Table~\ref{tab1.ex3} we report the results for $p=10$, $n=50$, and different values of $N$. Notice that even though the Galerkin-ADI approach efficiently computes the approximation spaces, the construction of the reduced model~\eqref{eq:E_D0} still requires the allocation of both $\LL$ and $\sL$. Therefore, also for the Galerkin-ADI scheme severe memory constraints hold and for $N>30\, 000$, we are not able to allocate the $\LL$ and $\sL$ matrices with complex entries on the machine used for running the tests.


\begin{table}[htbp!]
 \centering
  \begin{tabular}{c|cccc|ccc}
       & \multicolumn{4}{c|}{Galerkin-ADI with $\varepsilon=10^{-4}$} & \multicolumn{3}{c}{{\tt svds} w/ $\widetilde\C$} \\
 &\# of& Scheme & Total  &  & & Total & \\
$N$ & Iter.&Time(s) & Time (s) & $\mathcal{H}_2$-error &  $\mathtt{hssrank}(\widetilde\C)$ & Time (s) & $\mathcal{H}_2$-error\\
    \hline
5 000 & 5 &2.93 & 6.61  & 1.55e-2 & 42 & 100.21 & 2.06e-9\\ 
10 000 & 5 &5.4 & 23.21  & 2.76e-2 & 46 & 174.04 & 1.81e-9\\
15 000 & 5 & 10.80 & 138.2 & 4.06e-2 & 49 & 280.11 & 1.31e-9\\
20 000 & 5 &14.67 & 342.34  & 9.45e-2 & 50 & 340.42 & 1.17e-9 \\
25 000 & 6 &23.38 & 668.56 & 1.18e-2 & 52 & 426.34 & 9.55e-10 \\
30 000 & 6 &30.67 & 1198.81 & 1.17e-1 & 52 & 540.20 & 9.06e-10 \\
\end{tabular}
\caption{Example \ref{Ex.3}. Number of iterations, computational time (in seconds) solely of the Galerkin-ADI iteration scheme together with the total time (including building the data, the full Loewner and shifted Loewner matrices and the projection step) as well as the $\mathcal{H}_2$-error achieved by the Galerkin-ADI approach. In comparison, we list the HSS-rank, the total time (in seconds) as well as the $\mathcal{H}_2$-error of the novel scheme presented in this paper for different values of $N$ (number of samples), $p=10$, and $n=50$.}\label{tab1.ex3}
 \end{table}
 
 Even though the Galerkin-ADI approach is faster for $N<20 \,000$, the computed approximation spaces are quite poor. Indeed, the computed reduced models are always $7$ orders of magnitude less accurate than the ones constructed by our approach. The paper~\cite{FSVDL} validates the Galerkin-ADI scheme on a system with randomly generated poles for various orders $n$ and number of samples $N$ but does not mention the accuracy of the resulting models. Moreover, in terms of CPU time, our results are comparable to the ones in~\cite{FSVDL} when considering the computational time solely of the Galerkin-ADI iteration, disregarding the steps involving building the full matrices and projecting these to obtain the reduced model. 
 
 The remarkable difference in the accuracy attained by the two approaches make any sort of computational comparison rather pointless. 
 However, we would like to point out that the computational time of the Galerkin-ADI approach grows quadratically with $N$ due to the need to assemble and store the full Loewner and shifted Loewner matrices, while an  $N \log N$ dependency of the computational cost of our novel approach can be evidenced once again from the timings reported in Table~\ref{tab1.ex3}. 
 
 Several ideas could be implemented to improve the accuracy of the models obtained with the Galerkin-ADI approach. In order to have the fairest comparisons with respect to our novel approach, each of these ideas will be tested separately to explore all the possibilities to enhance the Galerkin-ADI approach from~\cite{FSVDL}.
 
 First, the tolerance $\varepsilon$ for solving the Lyapunov equation via Galerkin-ADI can be chosen to a value comparable to the noise level for an $SNR$ of $120$, namely $\varepsilon =10^{-12}$. Results are detailed in Table \ref{tab1.ex3.tol} only for the case $N = 5\,000$, $p=10$, and $n=50$ as the trend is obvious from this one example. While the accuracy of the model has slightly improved with respect to results obtained for $\varepsilon=10^{-4}$, the number of iterations has also considerably increased, leading to matrices $L_k$ of much larger dimensions for which the SVD $L_k = U_kS_kV_k^*$ becomes costly. Hence, the CPU cost of the scheme has exploded and is no longer viable. In any case, even for a tolerance value close to the noise level, the accuracy of the model is several orders of magnitude worse than with our proposed technique ($10^{-3}$ versus $10^{-9}$).  
 
 \begin{table}[htbp!]
 \centering
  \begin{tabular}{c|cccc}
$\varepsilon$&\# of Iter.& Scheme Time(s)& Total Time(s) &  $\mathcal{H}_2$-error \\
    \hline
    $10^{-4}$& 5 &2.93 & 6.61 & 1.55e-2\\ 
 $10^{-12}$& 51 & 1648.02 & 1655.01 & 2.20e-3\\ 
\end{tabular}
\caption{Example \ref{Ex.3}. Number of iterations, computational time (in seconds) solely of the Galerkin-ADI iteration scheme together with the total time (including building the data, the full Loewner and shifted Loewner matrices and the projection step) as well as the $\mathcal{H}_2$-error achieved by the Galerkin-ADI approach for $N = 5\,000$, $p=10$, and $n=50$.}\label{tab1.ex3.tol}
 \end{table}
 
Second, it is always advisable to compute the projection subspaces from a linear combination of $\sL$ and $\LL$, namely $\sL-x\LL$ rather than only $\LL$, as the Loewner matrix $\LL$ encodes the strictly rational part and the addition of $\sL$ provides all the information on the system, including its polynomial part (the $\bD$-term). 
We apply the low-rank Galerkin-ADI method to the Sylvester equation fulfilled by $\sL-x\LL$ thus computing a matrix $P_kZ_kQ_k^*$ such that $P_kZ_kQ_k^*\approx \sL-x\LL$.
Results are detailed in Table \ref{tab1.ex3.lincomb} for the case $\varepsilon =10^{-4}$, $N = 5\,000$, $p=10$, and $n=50$. For all instances considered, results were comparable in terms of CPU time to those obtained when considering solely the Sylvester equation satisfied by $\LL$ in the Galerkin-ADI iteration (listed in the first line of Table~\ref{tab1.ex3.lincomb} for reference), while in terms of accuracy, they are slightly worse. For this example, the sole benefit of using a linear combination $\sL-x\LL$ might be the system identification properties as, in principle, a sharp drop in the SVD of $Z_k$ reveals the degree of the underlying system.
 
 \begin{table}[htbp!]
 \centering
  \begin{tabular}{l|cccc}
&\# of Iter.& Scheme Time(s)& Total Time(s) &  $\mathcal{H}_2$-error \\
    \hline
$\LL$& 5 &2.93 & 6.61  & 1.55e-2\\ 
$\sL$& 6 & 3.96 & 6.57 & 5.22e-2\\ 
$\sL-x\LL$, $x = f(1)$& 4 & 2.97 & 9.51 & 9.23e-2\\ 
$\sL-x\LL$, $x = f(N/2)$& 4 & 2.89 & 9.34 & 9.23e-2\\ 
$\sL-x\LL$, $x = f(N)$& 4 & 2.95 & 9.55 & 9.23e-2\\ 
\end{tabular}
\caption{Example \ref{Ex.3}. Number of iterations, computational time (in seconds) solely of the Galerkin-ADI iteration scheme together with the total time (including building the data, the full Loewner and shifted Loewner matrices and the projection step) as well as the $\mathcal{H}_2$-error achieved by the Galerkin-ADI approach on the Sylvester equations satisfied by $\LL$, $\sL$ and $\sL-x\LL$, for $\varepsilon =10^{-4}$, $N = 5\,000$, $p=10$, and $n=50$.}\label{tab1.ex3.lincomb}
 \end{table}
 
The third avenue worth exploring is employing real arithmetic and the corresponding Sylvester equations \eqref{eq:Sylv_realLL} and \eqref{eq:Sylv_realsL}. Table \ref{tab1.ex3.real} shows the results obtained using real arithmetic, both for the Galerkin-ADI scheme, as well as our proposed method. For reference, the first line in Table \ref{tab1.ex3.real} lists the results previously obtained in complex arithmetic. For the method in \cite{FSVDL}, the cost of the scheme has mostly increased, due to more complicated Sylvester equations in \eqref{eq:Sylv_realLL} and \eqref{eq:Sylv_realsL}. The CPU cost of building the data matrices, the full Loewner and shifted Loewner matrices has also increased, yielding a total cost far superior to that obtained in complex arithmetic. In some instances, the accuracy has improved slightly. On the other hand, the real arithmetic causes the HSS-rank of the Cauchy matrix approximation to be much smaller with a remarkable impact on the CPU time and almost no effects on the model accuracy when using our novel approach. 
 
\begin{table}[htbp!]
 \centering
  \begin{tabular}{c|cccc|ccc}
       & \multicolumn{4}{c|}{Galerkin-ADI with $\varepsilon=10^{-4}$} & \multicolumn{3}{c}{{\tt svds} w/ $\widetilde\C$} \\
 &\# of& Scheme & Total  &  & & Total & \\
$N$ & Iter.&Time(s) & Time (s) & $\mathcal{H}_2$-error &  $\mathtt{hssrank}(\widetilde\C)$ & Time (s) & $\mathcal{H}_2$-error\\
    \hline
$\LL$ complex & 5 &2.93 & 6.61 & 1.55e-2 & 42 & 100.21 & 2.06e-9\\ 
$\LL$ & 3 &1.76 & 50.43 & 1.03e-2 & 24& 81.56 & 2.05e-9\\
$\sL$& 6 & 3.61 & 52.42 & 2.66e-3 & 24 & 80.97 & 2.05e-9\\
$\sL-x\LL$, $x = f(1)$& 7 & 18.57 & 65.84 & 2.53e-3 & 24 & 80.84 & 2.05e-9 \\
$\sL-x\LL$, $x = f(N/2)$& 5 & 15.13 & 63.63 & 1.87e-2& 24 & 82.11 & 2.05e-9 \\
$\sL-x\LL$, $x = f(N)$& 6 & 16.47& 66.72 & 9.01e-2 & 24 & 82.95 & 2.05e-9 \\
\end{tabular}
\caption{Example \ref{Ex.3}. Number of iterations, computational time (in seconds) solely of the Galerkin-ADI iteration scheme together with the total time (including building the data, the full Loewner and shifted Loewner matrices and the projection step) as well as the $\mathcal{H}_2$-error achieved by the Galerkin-ADI approach. In comparison, we list the HSS-rank, the total time (in seconds) as well as the $\mathcal{H}_2$-error of the novel scheme presented in this paper for different values of $N$ (number of samples), $p=10$, and $n=50$ when employing real arithmetic.}\label{tab1.ex3.real}
 \end{table}
 
 
 We conclude this example by mentioning that the use of a \emph{hybrid} approach may be fruitful. In particular, our novel approach can be employed to avoid storing the large and dense Loewner and shifted Loewner matrices. Then, the Galerkin-ADI scheme can be used to  compute the first dominant singular vectors of $\sL-x\LL$, instead of employing {\tt svds}, thus also being able to identify the order of the underlying system. However, the accuracy will not be comparable to that of our proposed approach. We implemented this idea and list the CPU times of the various steps in Table \ref{tab1.ex3.hyb} together with the resulting accuracy for Galerkin-ADI applied to solving the Sylvester equation \eqref{eq:Sylv_realLL} for $\LL$ in real arithmetic with $\varepsilon=10^{-4}$ for $N = 5\,000$, $p=10$, and $n=50$. Plots of the responses of our proposed approach, together with the Galerkin-ADI scheme as proposed in \cite{FSVDL} and the hybrid approach are shown in Figure~\ref{fig.ex3.resp}. Even though the general shape of the response is well captured, some resonances are not modeled accurately, as expected from the much higher model errors reported earlier. This can be noticed better from the error plots in Figure~ \ref{fig.ex3.err}. 
 
  \begin{table}[htbp!]
 \centering
  \begin{tabular}{c|c|c|c|c}
Data matrices Time(s)&Galerkin-ADI Scheme Time(s) & Projection Time(s) & Total Time(s) &  $\mathcal{H}_2$-error \\
    \hline
 0.8& 1.76 &4.65 & 7.21 & 2.1e-2
\end{tabular}
\caption{Example \ref{Ex.3}. Computational time (in seconds) of the three individual steps in the \emph{hybrid} approach: setting up of the data matrices, the Galerkin-ADI iteration scheme and projection to obtain the reduced model, together with the total time as well as the $\mathcal{H}_2$-error for $N = 5000$, $p=10$, and $n=50$ in real arithmetic.}\label{tab1.ex3.hyb}
 \end{table}
 
  \begin{figure}[t]
  \centering
  \hspace*{-1.2cm}
\begin{minipage}[c]{.32\textwidth}
\centering
\includegraphics[scale=0.3]{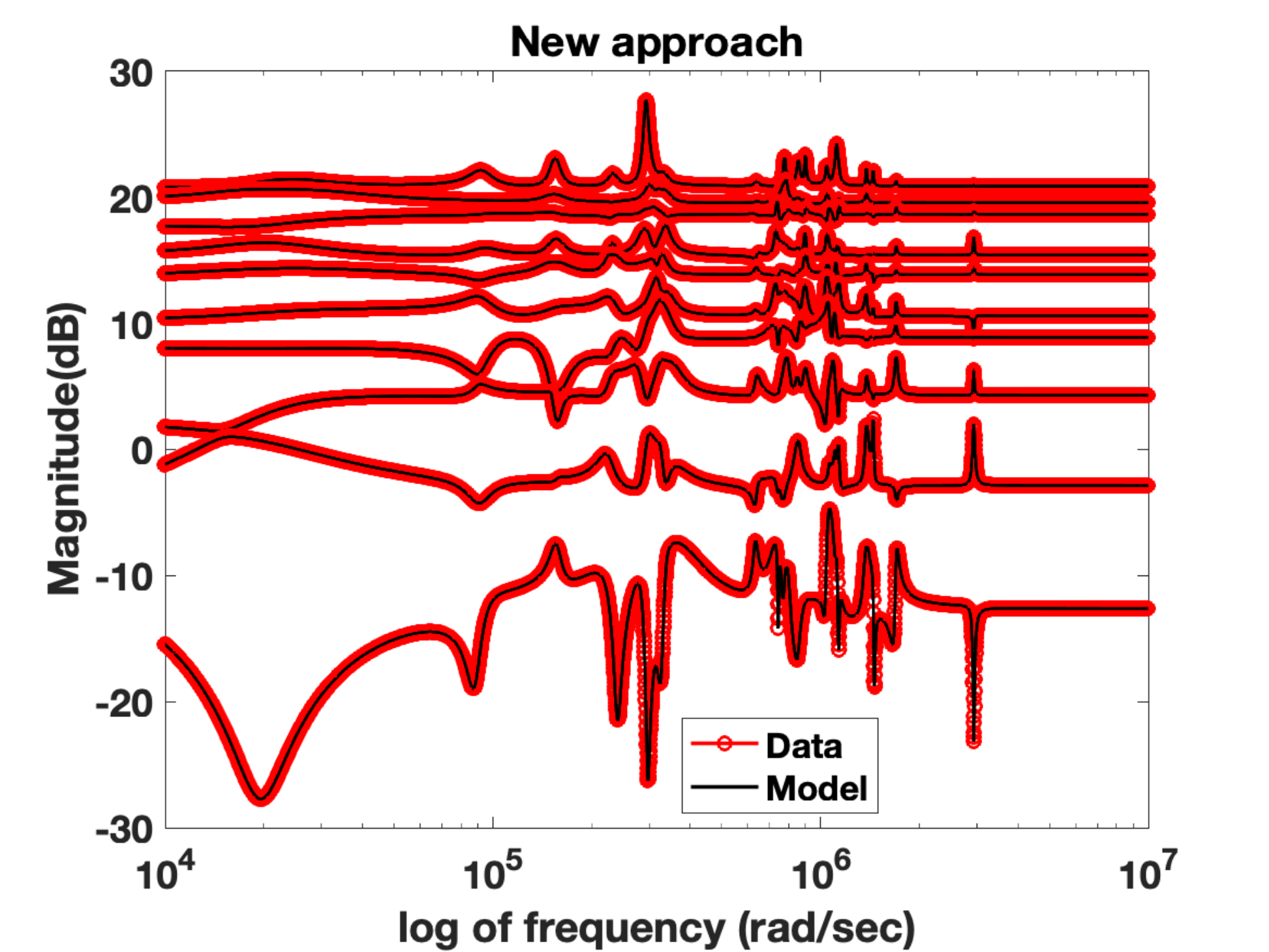}%
\end{minipage}
\hspace{3mm}%
\begin{minipage}[c]{.32\textwidth}
\centering
\includegraphics[scale=0.3]{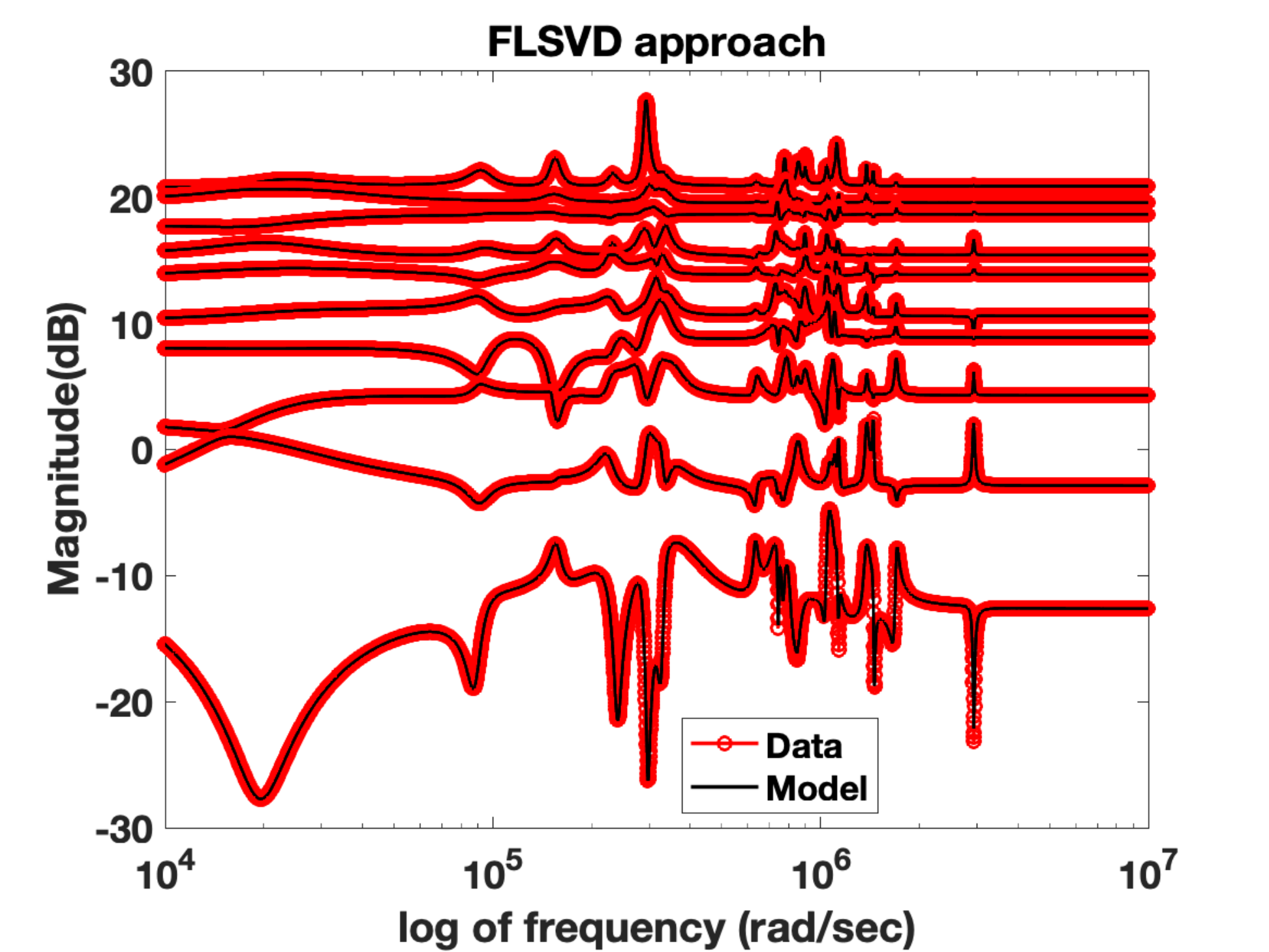}%
\end{minipage}
\hspace{3mm}%
\begin{minipage}[c]{.32\textwidth}
\centering
\includegraphics[scale=0.3]{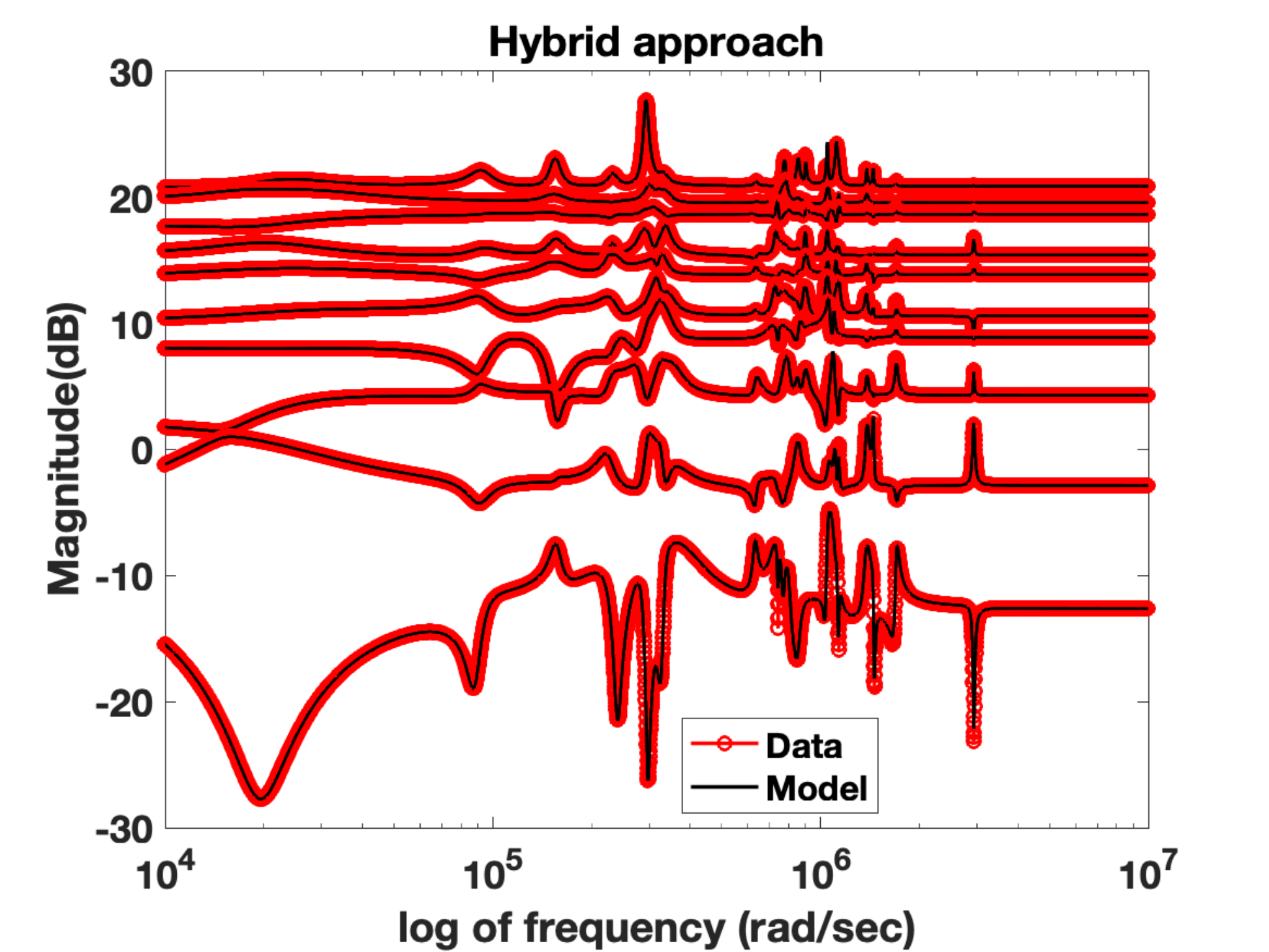}%
\end{minipage}
\hspace*{-0.2cm}
\caption{Example~\ref{Ex.3}. Frequency response of the model (in black) and the measurements (in red) for $N = 5\,000$, $p=10$, and $n=50$ using our proposed approach, Galerkin-ADI as in \cite{FSVDL} and the hybrid approach, employing real arithmetic. }\label{fig.ex3.resp}
\end{figure}

  \begin{figure}[t]
  \centering
  \hspace*{-1.2cm}
\begin{minipage}[c]{.32\textwidth}
\centering
\includegraphics[scale=0.3]{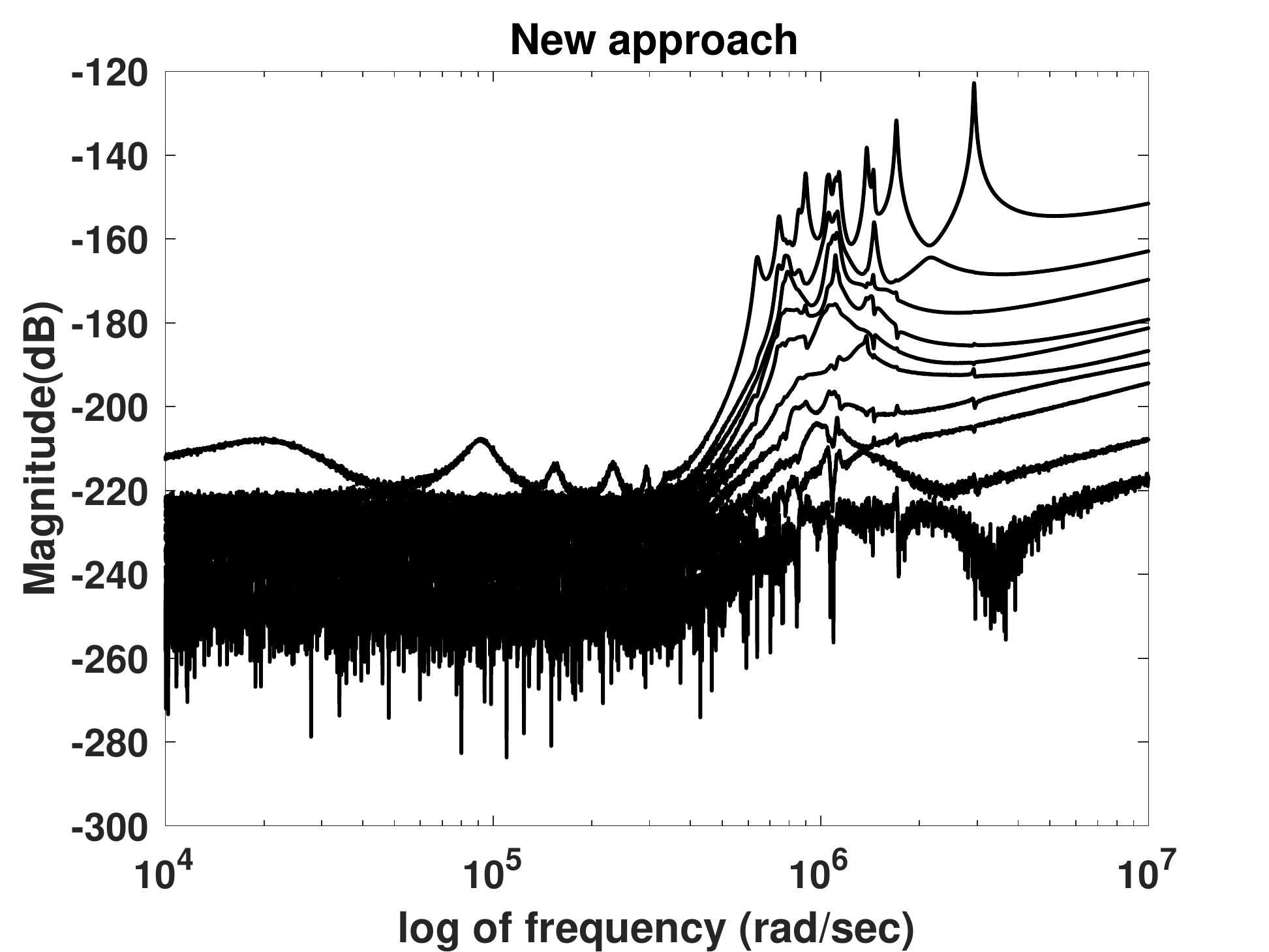}%
\end{minipage}
\hspace{3mm}%
\begin{minipage}[c]{.32\textwidth}
\centering
\includegraphics[scale=0.3]{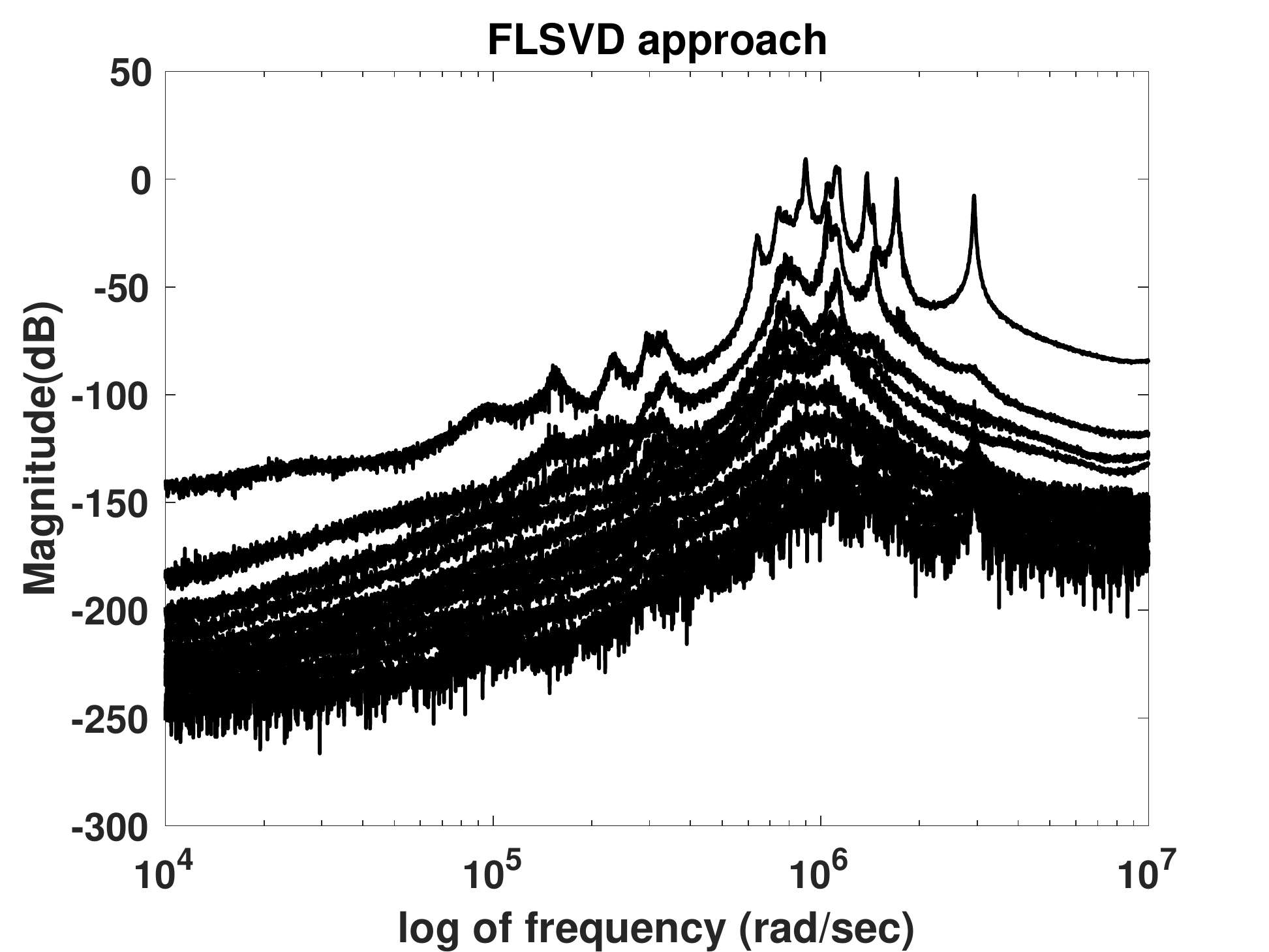}%
\end{minipage}
\hspace{3mm}%
\begin{minipage}[c]{.32\textwidth}
\centering
\includegraphics[scale=0.3]{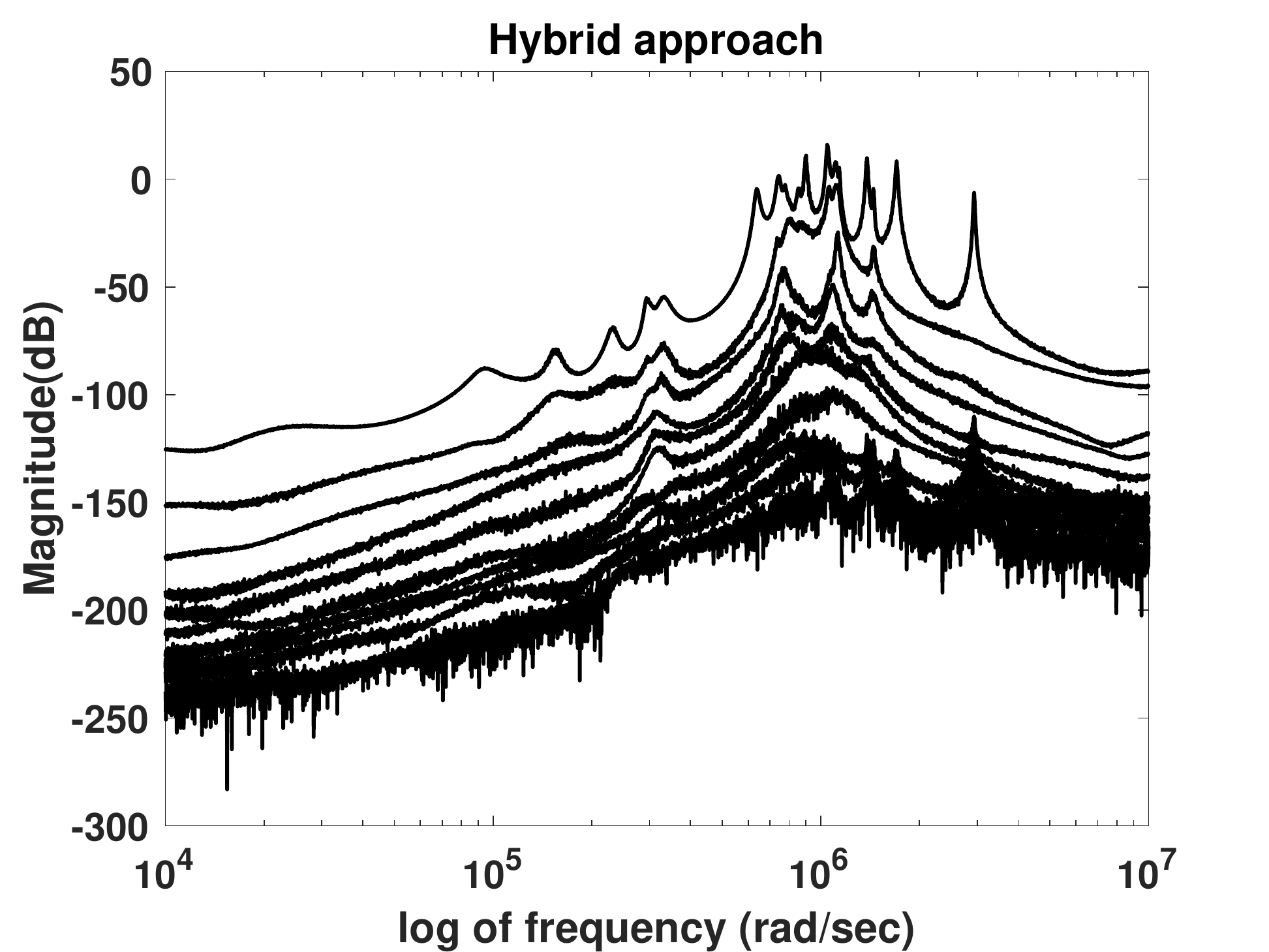}%
\end{minipage}
\hspace*{-0.2cm}
\caption{Example~\ref{Ex.3}. Error plots for $N = 5\,000$, $p=10$, and $n=50$ using our proposed approach, Galerkin-ADI as in \cite{FSVDL} and the hybrid approach, employing real arithmetic. }\label{fig.ex3.err}
\end{figure}
}
\end{num_example}

\section{Conclusion}\label{sect:concl}
By exploiting the Cauchy-like structure of the Loewner and shifted Loewner matrices, a novel strategy for reducing the computational costs and the memory requirements of the Loewner framework has been proposed. In particular, the use of the HSS-format leads to tremendous savings in the storage demand and computational efforts of the overall scheme. Indeed, except for the construction of $\widetilde \C$ whose cost is polylogarithmic in $N$, both the memory requirements and the computational cost of iteratively performing the SVD now linearly depend on the cardinality of the considered data set.

The success of our procedure strongly relies on the capability of representing the Cauchy matrix $\C$ in terms of an HSS-matrix $\widetilde\C$ with low $\left(\alpha,\beta\right)$ rank of the off-diagonal blocks. Even though we restricted ourselves to showing how different, but common, partitions of the frequencies affect the HSS-rank of $\widetilde\C$, a thorough analysis of their connection may be beneficial. Moreover, we have always computed $\widetilde\C$ at high accuracy. We believe that the employment of more inexact, and thus with a lower rank, HSS-representations of $\C$ and its effects on the accuracy of the overall scheme may be another interesting research direction which is worth pursuing depending on the application at hand. 

The strategy presented in this paper can be applied to more sophisticated problems as long as the Loewner and shifted Loewner matrices maintain a Cauchy-like structure. In particular, our approach can be employed with minor modifications in model order reduction of parametrized \cite{IonitaPhD}, linear switched \cite{GosPA18}, and bilinear systems \cite{AntGI16}.

\section*{Acknowledgements}
We are in debt with Leonardo Robol for some help with \cite{hmtoolbox} and fruitful discussions about the topic of this paper. His assistance is greatly appreciated. We also thank Peter Benner and Jens Saak for insightful comments on earlier versions of the manuscript.

The first author is member of the Italian INdAM Research group GNCS.

 \section*{Declarations}
 The research presented in this paper is based upon work supported by the National Science Foundation under Grant No. DMS-1439786 while both the authors were in residence at the Institute for Computational and Experimental Research in Mathematics (ICERM) in Providence, RI, during the \emph{Model and Dimension Reduction in Uncertain and Dynamic Systems} program.
Even though the second half of the program had to be performed virtually due to the restrictions caused by the COVID-19 pandemic, 
we are extremely grateful to the organizers of the program and the whole staff of ICERM for doing whatever possible to maintain an exciting, fruitful, and high-quality working environment.

 The authors have no conflicts of interest to declare that are relevant to the content of this article.
 
 The datasets and algorithms generated during and/or analysed during the current study are available from the corresponding author on reasonable request. Moreover, the approach presented in this paper will be included in the {\tt hm-toolbox} in the near future.

\bibliographystyle{siam}
\bibliography{biblio}

\begin{thebibliography}{10}

\bibitem{AntGI16}
{\sc A.~C. Antoulas, I.~V. Gosea, and A.~C. Ioni{\c t}{\u a}}, {\em Model
  reduction of bilinear systems in the {L}oewner framework}, SIAM Journal on
  Scientific Computing, 38 (2016), pp.~B889--B916.

\bibitem{TutorialLoewner}
{\sc A.~C. Antoulas, S.~Lefteriu, and A.~C. Ioni{\c t}{\u a}}, {\em A tutorial
  introduction to the {L}oewner framework for model reduction}, ch.~8,
  pp.~335--376.

\bibitem{BagRei05}
{\sc J.~Baglama and L.~Reichel}, {\em Augmented implicitly restarted {L}anczos
  bidiagonalization methods}, {SIAM} J. Sci. Comput., 27 (2005), pp.~19--42.

\bibitem{ACA}
{\sc M.~Bebendorf}, {\em {Approximation of boundary element matrices}}, Numer.
  Math., 86 (2000), pp.~565--589.

\bibitem{Beckermann2019}
{\sc B.~Beckermann and A.~Townsend}, {\em Bounds on the singular values of
  matrices with displacement structure}, SIAM Rev., 61 (2019), pp.~319--344.

\bibitem{Bouras2005}
{\sc A.~Bouras and V.~Frayss\'{e}}, {\em Inexact matrix-vector products in
  {K}rylov methods for solving linear systems: a relaxation strategy}, SIAM J.
  Matrix Anal. Appl., 26 (2005), pp.~660--678.

\bibitem{Carrier1988}
{\sc J.~Carrier, L.~Greengard, and V.~Rokhlin}, {\em A fast adaptive multipole
  algorithm for particle simulations}, SIAM J. Sci. Statist. Comput., 9 (1988),
  pp.~669--686.

\bibitem{Chandrasekaran2006}
{\sc S.~Chandrasekaran, P.~Dewilde, M.~Gu, W.~Lyons, and T.~Pals}, {\em A fast
  solver for {HSS} representations via sparse matrices}, SIAM J. Matrix Anal.
  Appl., 29 (2006/07), pp.~67--81.

\bibitem{Chandrasekaran2007}
{\sc S.~Chandrasekaran, M.~Gu, X.~Sun, J.~Xia, and J.~Zhu}, {\em A superfast
  algorithm for {T}oeplitz systems of linear equations}, SIAM J. Matrix Anal.
  Appl., 29 (2007), pp.~1247--1266.

\bibitem{Thermalsys}
{\sc A.~S. Derakhtenjani, J.~A. Candanedo, Y.~Chen, V.~R. Dehkordi, and A.~K.
  Athienitis}, {\em Modeling approaches for the characterization of building
  thermal dynamics and model-based control: A case study}, Science and
  Technology for the Built Environment, 21 (2015), pp.~824--836.

\bibitem{CUR}
{\sc P.~Drineas, M.~W. Mahoney, and S.~Muthukrishnan}, {\em {Relative-Error
  $CUR$ Matrix Decompositions}}, SIAM Journal on Matrix Analysis and
  Applications, 30 (2008), pp.~844--881.

\bibitem{Embree2019}
{\sc M.~Embree and A.~C. Ioni{\c t}{\u a}}, {\em Pseudospectra of {L}oewner
  matrix pencils},  (2019).
\newblock ArXiv preprint: 1910.12153.

\bibitem{Freitag2007}
{\sc M.~A. Freitag and A.~Spence}, {\em Convergence theory for inexact inverse
  iteration applied to the generalised nonsymmetric eigenproblem}, Electron.
  Trans. Numer. Anal., 28 (2007/08), pp.~40--64.

\bibitem{GAAF2017}
{\sc S.~W. Gaaf and V.~Simoncini}, {\em Approximating the leading singular
  triplets of a large matrix function}, Applied Numerical Mathematics, 113
  (2017), pp.~26 -- 43.

\bibitem{Gohberg1994}
{\sc I.~Gohberg and V.~Olshevsky}, {\em Fast algorithms with preprocessing for
  matrix-vector multiplication problems}, J. Complexity, 10 (1994),
  pp.~411--427.

\bibitem{GosPA18}
{\sc I.~V. Gosea, M.~Petreczky, and A.~C. Antoulas}, {\em Data-driven model
  order reduction of linear switched systems in the {L}oewner framework}, SIAM
  Journal on Scientific Computing, 40 (2018), pp.~B572--B610.

\bibitem{GREENGARD1987325}
{\sc L.~Greengard and V.~Rokhlin}, {\em A fast algorithm for particle
  simulations}, Journal of Computational Physics, 73 (1987), pp.~325 -- 348.

\bibitem{FSVDL}
{\sc A.~{Hochman}}, {\em {Fast singular-value decomposition of {L}oewner
  matrices for state-space macromodeling}}, in 2015 IEEE 24th Electrical
  Performance of Electronic Packaging and Systems (EPEPS), 2015, pp.~177--180.

\bibitem{Hoc01}
{\sc M.~E. Hochstenbach}, {\em A {J}acobi--{D}avidson type {SVD} method},
  {SIAM} J. Sci. Comput., 23 (2001), pp.~606--628.

\bibitem{Horn1991}
{\sc R.~Horn and C.~Johnson}, {\em Topics in Matrix Analysis}, Cambridge Univ.
  Press, Cambridge, UK, 1991.

\bibitem{IonitaPhD}
{\sc A.~C. Ioni{\c t}{\u a}}, {\em Lagrange rational interpolation and its
  applications to approximation of large-scale dynamical systems}, PhD thesis,
  Rice University, Aug. 2013.

\bibitem{LoewCUR}
{\sc D.~Karachalios, I.~Gosea, and A.~Antoulas}, {\em Data-driven approximation
  methods applied to non-rational functions}, Proc. Appl. Math. Mech., 18
  (2018).

\bibitem{Kressner2019}
{\sc D.~Kressner, S.~Massei, and L.~Robol}, {\em Low-rank updates and a
  divide-and-conquer method for linear matrix equations}, SIAM J. Sci. Comput.,
  41 (2019), pp.~A848--A876.

\bibitem{Kuerschner2018}
{\sc P.~K\"{u}rschner and M.~Freitag}, {\em Inexact methods for the low rank
  solution to large scale {L}yapunov equations}, BIT Numerical Mathematics,
  (2020).

\bibitem{Lar98}
{\sc R.~Larsen}, {\em Lanczos bidiagonalization with partial
  reorthogonalization}, DAIMI Report Series, 27 (1998).

\bibitem{SLACATCAD09}
{\sc S.~{Lefteriu} and A.~C. {Antoulas}}, {\em {A New Approach to Modeling
  Multiport Systems From Frequency-Domain Data}}, IEEE Transactions on
  Computer-Aided Design of Integrated Circuits and Systems, 29 (2010),
  pp.~14--27.

\bibitem{MassPR18}
{\sc S.~Massei, D.~Palitta, and L.~Robol}, {\em Solving rank-structured
  {S}ylvester and {L}yapunov equations}, SIAM Journal on Matrix Analysis and
  Applications, 39 (2018), pp.~1564--1590.

\bibitem{hmtoolbox}
{\sc S.~Massei, L.~Robol, and D.~Kressner}, {\em hm-toolbox: {MATLAB} software
  for {HODLR} and {HSS} matrices}, SIAM J. Sci. Comput., 42 (2020),
  pp.~C43--C68.

\bibitem{MATLAB2020b}
{\sc {MATLAB}}, {\em version 9.9.0.1467703 (R2020b)}, The MathWorks Inc.,
  Natick, Massachusetts, 2020.

\bibitem{artAJMACA}
{\sc A.~J. Mayo and A.~C. Antoulas}, {\em A framework for the solution of the
  generalized realization problem}, Linear Algebra and Its Applications, 405
  (2007), pp.~634--662.

\bibitem{nakatsukasa2020fast}
{\sc Y.~Nakatsukasa}, {\em Fast and stable randomized low-rank matrix
  approximation}, 2020.
\newblock ArXiv preprint: 2009.11392.

\bibitem{Kuerschner2019}
{\sc D.~Palitta and P.~K{\"u}rschner}, {\em On the convergence of low-rank
  {K}rylov methods},  (2021).
\newblock Accepted for publication in Numerical Algorithms. ArXiv preprint:
  1909.01226.

\bibitem{Pan2014}
{\sc V.~Y. Pan}, {\em Fast approximate computations with {C}auchy matrices,
  polynomials and rational functions}, in Computer Science - Theory and
  Applications, E.~A. Hirsch, S.~O. Kuznetsov, J.-{\'E}. Pin, and N.~K.
  Vereshchagin, eds., Cham, 2014, Springer International Publishing,
  pp.~287--299.

\bibitem{PAN2015107}
{\sc V.~Y. Pan}, {\em Transformations of matrix structures work again}, Linear
  Algebra and its Applications, 465 (2015), pp.~107 -- 138.

\bibitem{PolyMAX}
{\sc B.~Peeters, H.~Van~der Auweraer, P.~Guillaume, and J.~Leuridan}, {\em The
  {P}oly{MAX} frequency-domain method: A new standard for modal parameter
  estimation?}, Shock and Vibration, 11 (2004), pp.~395--409.

\bibitem{POUSSOTVASSAL2018559}
{\sc C.~Poussot-Vassal, D.~Quero, and P.~Vuillemin}, {\em Data-driven
  approximation of a high fidelity gust-oriented flexible aircraft dynamical
  model}, IFAC-PapersOnLine, 51 (2018), pp.~559 -- 564.
\newblock 9th Vienna International Conference on Mathematical Modelling.

\bibitem{9073015}
{\sc M.~{Sahouli} and A.~{Dounavis}}, {\em Iterative {L}oewner matrix
  macromodeling using {CUR} decomposition for noisy frequency responses}, in
  2019 IEEE 28th Conference on Electrical Performance of Electronic Packaging
  and Systems (EPEPS), 2019, pp.~1--3.

\bibitem{Simoncini2002}
{\sc V.~Simoncini and L.~Eld\'{e}n}, {\em Inexact {R}ayleigh quotient-type
  methods for eigenvalue computations}, BIT, 42 (2002), pp.~159--182.

\bibitem{Simoncini2003}
{\sc V.~Simoncini and D.~B. Szyld}, {\em Theory of inexact {K}rylov subspace
  methods and applications to scientific computing}, SIAM J. Sci. Comput., 25
  (2003), pp.~454--477.

\bibitem{Sto12}
{\sc M.~Stoll}, {\em A {Krylov-Schur} approach to the truncated {SVD}}, Linear
  Algebra Appl., 436 (2012), pp.~2795--2806.

\bibitem{morWiki}
{\sc {The MORwiki Community}}, {\em {MORwiki} - {M}odel {O}rder {R}eduction
  {W}iki}.
\newblock \url{http://modelreduction.org}.

\bibitem{Eshof2004}
{\sc J.~van~den Eshof and G.~L.~G. Sleijpen}, {\em Inexact {K}rylov subspace
  methods for linear systems}, SIAM J. Matrix Anal. Appl., 26 (2004),
  pp.~125--153.

\bibitem{Vandebril2005}
{\sc R.~Vandebril, M.~Van~Barel, G.~Golub, and N.~Mastronardi}, {\em A
  bibliography on semiseparable matrices}, Calcolo, 42 (2005), pp.~249--270.

\bibitem{VogXCB16}
{\sc J.~Vogel, J.~Xia, S.~Cauley, and V.~Balakrishnan}, {\em Superfast
  divide-and-conquer method and perturbation analysis for structured eigenvalue
  solutions}, SIAM Journal on Scientific Computing, 38 (2016),
  pp.~A1358--A1382.

\bibitem{Xi2014}
{\sc Y.~Xi, J.~Xia, S.~Cauley, and V.~Balakrishnan}, {\em Superfast and stable
  structured solvers for {T}oeplitz least squares via randomized sampling},
  SIAM J. Matrix Anal. Appl., 35 (2014), pp.~44--72.

\bibitem{Xia2010}
{\sc J.~Xia, S.~Chandrasekaran, M.~Gu, and X.~S. Li}, {\em Fast algorithms for
  hierarchically semiseparable matrices}, Numerical Linear Algebra with
  Applications, 17 (2010), pp.~953--976.

\bibitem{XiaCGL10}
{\sc J.~Xia, S.~Chandrasekaran, M.~Gu, and X.~S. Li}, {\em Superfast
  multifrontal method for large structured linear systems of equations}, SIAM
  Journal on Matrix Analysis and Applications, 31 (2010), pp.~1382--1411.

\end{thebibliography}
\end{document}